\documentclass{article}
\usepackage{amsmath,amssymb,amsthm,color,graphicx,diagbox,tikz,colortbl,array}
\usepackage{setspace}
\usepackage{float}

\title{
Modal logical aspects of provability predicates and consistency statements
}
\author{Haruka Kogure\footnote{Email: kogure@stu.kobe-u.ac.jp}
\footnote{Graduate School of System Informatics, Kobe University, 1-1 Rokkodai, Nada, Kobe 657-8501, Japan.}
and Taishi Kurahashi\footnote{Email: kurahashi@people.kobe-u.ac.jp}
\footnote{Graduate School of System Informatics, Kobe University, 1-1 Rokkodai, Nada, Kobe 657-8501, Japan.}}
\date{}

\theoremstyle{plain}
\newtheorem{thm}{Theorem}[section]
\newtheorem*{thm*}{Theorem}

\newtheorem{prop}[thm]{Proposition}
\newtheorem{cor}[thm]{Corollary}

\newtheorem*{fact*}{Fact}

\newtheorem*{prob*}{Problem}
\newtheorem*{cl*}{Claim}
\newtheorem{cl}{Claim}[section]
\newtheorem*{scl*}{Subclaim}

\theoremstyle{definition}
\newtheorem{defn}[thm]{Definition}

\newcommand{\PL}{\mathsf{PL}}
\newcommand{\PRR}{\mathrm{Pr}^{\mathrm{R}}}

\newcommand{\Sub}{\mathsf{Sub}}
\newcommand{\MF}{\mathsf{MF}}
\newcommand{\N}{\mathsf{N}}
\newcommand{\tc}{\vdash^{\mathrm{t}}}
\newcommand{\Img}{\mathrm{Im}}

\newcommand{\PA}{\mathsf{PA}}
\newcommand{\PR}{\mathrm{Pr}}
\newcommand{\Prf}{\mathrm{Prf}}
\newcommand{\Prov}{\mathrm{Prov}}
\newcommand{\Proof}{\mathrm{Proof}}
\newcommand{\Con}{\mathrm{Con}}

\newcommand{\gn}[1]{\ulcorner#1\urcorner}

\newcommand{\D}[1]{\mathbf{D#1}}

\newcommand{\num}{\overline}

\newcommand{\LA}{\mathcal{L}_A}

\newcommand{\ConH}{\mathrm{Con}^{\mathrm{H}}}
\newcommand{\ConS}{\mathrm{Con}^{\mathrm{S}}}

\newcommand{\ConL}{\mathrm{Con}^{\mathrm{L}}}

\newcommand{\NPF}{\mathsf{NP4}}
\newcommand{\NDF}{\mathsf{ND4}}
\newcommand{\NP}{\mathsf{NP}}
\newcommand{\ND}{\mathsf{ND}}
\newcommand{\GL}{\mathsf{GL}}

\newcommand{\PRD}{\mathrm{Pr}^{\mathrm{A}}}

\begin{document}

\maketitle

\begin{abstract}
This paper studies the modal logical aspects of provability predicates and consistency statements for theories of arithmetic.
First, we provide an overview of previous works on the correspondence between various derivability conditions for provability predicates and different modal logics.
The main technical contribution of the present paper is to establish the arithmetical completeness of the logics $\mathsf{NP}$, $\mathsf{ND}$, $\mathsf{NP4}$, and $\mathsf{ND4}$ by extending Solovay's method and refining Arai's construction of Rosser provability predicates.
\end{abstract}

\section{Introduction}

This paper is part of a research project on the modal logical studies of derivability conditions for provability predicates and various formulations of consistency statements.
Let $T$ be a primitive recursively axiomatized consistent extension of Peano Arithmetic $\PA$.
G\"odel's second incompleteness theorem (G2) states that $T$ cannot prove the sentence $\Con_T$ expressing its own consistency \cite{Goedel}. 
However, in order to state and prove G2 precisely, one must carefully specify how the consistency statement $\Con_T$ is formulated in terms of a provability predicate $\PR_T(x)$. 
Also, the exact form of G2 depends on how the provability predicate is defined and on which derivability conditions it satisfies.
Hilbert and Bernays \cite{HB} provided the first detailed proof of G2 by presenting a set of conditions for provability predicates that suffice for their proof. 
Later, L\"ob \cite{Loeb} refined them to the following well-known conditions $\D{1}$, $\D{2}$, and $\D{3}$, currently called the Hilbert–Bernays–L\"ob derivability conditions:
\begin{description}
    \item [$\D{1}$]: $T \vdash \varphi \Rightarrow T \vdash \PR_T(\gn{\varphi})$. 
    \item [$\D{2}$]: $T \vdash \PR_T(\gn{\varphi \to \psi}) \to (\PR_T(\gn{\varphi}) \to \PR_T(\gn{\psi}))$. 
    \item [$\D{3}$]: $T \vdash \PR_T(\gn{\varphi}) \to \PR_T(\gn{\PR_T(\gn{\varphi})})$. 
\end{description} 
If $\PR_T(x)$ satisfies these conditions, then the consistency statement $\ConL_T : \equiv \neg \PR_T(\gn{0=1})$ is unprovable in $T$. 
This is the familiar version of G2. 
Moreover, these conditions yield the formalized version of L\"ob's theorem: $T \vdash \PR_T(\gn{\PR_T(\gn{\varphi}) \to \varphi}) \to \PR_T(\gn{\varphi})$. 
In fact, the naturally constructed provability predicate $\Prov_T(x)$ of $T$ satisfies the conditions $\D{1}$–$\D{3}$. 
\emph{Provability logic} is the research area that studies provability predicates by means of modal logic. 
The modal logic $\GL$ is axiomatized by the axioms and rules corresponding to $\D{1}$-$\D{3}$ together with the formalized version of L\"ob's theorem. 
Solovay's arithmetical completeness theorem \cite{Solovay} states that $\GL$ precisely captures all $T$-verifiable principles concerning $\Prov_T(x)$, provided $T$ is $\Sigma_1$-sound.

The version of G2 based on $\D{1}$–$\D{3}$ and $\ConL_T$ is not the only possible one. 
Actually, Hilbert and Bernays dealt with an alternative formulation of the consistency statement $\ConH_T : \equiv \forall x \, \neg (\PR_T(x) \land \PR_T(\dot{\neg} x))$ and proved G2 under different sets of derivability conditions.
Here, $\dot{\neg}$ is a primitive recursive term corresponding to a primitive recursive function calculating the G\"odel number of $\neg \varphi$ from that of $\varphi$. 
Jeroslow \cite{Jeroslow} and Kreisel and Takeuti \cite{KT} later showed that formalized $\Sigma_1$-completeness suffices for the unprovability of $\ConH_T$, among other results. 
These developments indicate that G2 is not a single statement, but rather a family of related theorems stating the unprovability of various consistency statements under appropriate derivability conditions on $\PR_T(x)$. 

In recent years, several studies have investigated the modal logics corresponding to various sets of derivability conditions and formulations of consistency statements. 
In particular, the papers \cite{Kura18,Kura18b,Kura20b,KK,Kura24,Kogure1,Kogure2} have established the arithmetical completeness of several modal logics. 
The present paper is situated within this line of research.  
The aim of this study is twofold.  
The first aim is to give a unified account of previous results on the modal logical investigation of G2 based on various derivability conditions and consistency statements (discussed in Section~\ref{Summary}). 
Through this overview, we clarify how the principles $\mathbf{E}$, $\mathbf{M}$, $\mathbf{C}$, and $\D{3}$, together with the consistency statements $\ConL_T$ and $\ConS_T := \{\neg \,(\PR_T(\gn{\varphi}) \land \PR_T(\gn{\neg \varphi})) \mid \varphi$ is a sentence$\}$, correspond to a wide hierarchy of modal logics, each representing a distinct combination of derivability conditions. 
Table \ref{Table1} in the next section summarizes the overall situation of known and new results.

The starting point for this hierarchy is Fitting, Marek, and Truszczy\'nski's pure logic of necessitation $\N$ \cite{FMT}. 
The second author proved in \cite{Kura24} that $\N$ is the provability logic of all provability predicates by using the relational semantics developed by Fitting et al. 
Moreover, it is proved that the logics $\mathsf{N4}$, $\mathsf{NR}$, and $\mathsf{NR4}$ obtained from $\N$ by adding at least one of the axiom $\mathsf{4}$: $\Box A \to \Box \Box A$ and the Rosser rule \textsc{Ros}: $\dfrac{\neg A}{\neg \Box A}$ are also arithmetically complete.  
The second aim of this paper is to prove the arithmetical completeness for the logics $\NP$, $\ND$, $\NPF$, and $\NDF$. 
The proof proceeds by first establishing the modal completeness and the finite frame property of these logics with respect to the relational semantics due to Fitting et al. 
Here, a modal logic $L$ is said to have the finite frame property if for any formula $A$, $A$ is provable in $L$ if and only if $A$ is valid in all finite frames validating all theorems of $L$.
Then, we extend the Solovay-style method to obtain  arithmetical completeness for the logics $\NP$, $\ND$, $\NPF$, and $\NDF$.
The case of $\NDF$, in particular, requires a crucial refinement of Arai's construction of Rosser provability predicates \cite{Arai}.

\section{Summary of previous works}\label{Summary}

This section provides an overview of our previous studies on the modal logical analysis of provability predicates and consistency statements. 
The aim of this section is twofold: to clarify how different derivability conditions correspond to modal principles, and to identify the logics whose arithmetical completeness remains unsettled.

\subsection{Provability predicates, consistency statements, and derivability conditions}

Let $T$ be a primitive recursively axiomatized consistent extension of Peano Arithmetic $\PA$ in the language $\LA$ of first-order arithmetic 
(see H\'ajek and Pudl\'ak \cite{HP} for the background of first-order arithmetic).
A formula $\PR_T(x)$ is called a \emph{provability predicate} of $T$ if for any $\LA$-formula $\varphi$, $T \vdash \varphi$ if and only if $\PA \vdash \PR_T(\gn{\varphi})$. 
As stated in the introduction, the derivability conditions $\D{1}$–$\D{3}$ suffice to prove the $T$-unprovability of the consistency statement $\ConL_T :\equiv \neg \PR_T(\gn{0=1})$. 
However, $\ConL_T$ is not the only reasonable formulation of consistency.  
In \cite{Kura20}, a systematic classification of derivability conditions and the corresponding versions of G2 was presented.
Based on this work, \cite{Kura25} further refined the framework by introducing additional derivability conditions and formulations of consistency statements. 

First, in addition to $\ConL_T$, we introduce the following two consistency principles:
\begin{itemize}
\item $\mathbf{Ros}$: If $T \vdash \neg \varphi$, then $T \vdash \neg \,\PR_T(\gn{\varphi})$,
\item $\ConS_T := \{\neg \,(\PR_T(\gn{\varphi}) \land \PR_T(\gn{\neg \varphi})) \mid \varphi$ is a sentence$\}$. 
\end{itemize}
Here, $\mathbf{Ros}$ is the \emph{Rosser rule}, and $\ConS_T$ is the \emph{schematic consistency statement}. 
The principle $\ConS_T$ was introduced in \cite{Kura21}. 

For a provability predicate $\PR_T(x)$ and an $\LA$-formula $\varphi$, we recursively define $\PR_T^n(\gn{\varphi})$ as follows: $\PR_T^0(\gn{\varphi}) \equiv \varphi$ and $\PR_T^{n+1}(\gn{\varphi}) \equiv \PR_T(\gn{\PR_T^n(\gn{\varphi})})$. 
Next, we introduce the following derivability conditions: 
\begin{itemize}
\item $\D{3}^n_m$: $T \vdash \PR_T^n(\gn{\varphi}) \to \PR_T^m(\gn{\varphi})$,
\item $\mathbf{E}$: If $T \vdash \varphi \leftrightarrow \psi$, then $T \vdash \PR_T(\gn{\varphi}) \leftrightarrow \PR_T(\gn{\psi})$,
\item $\mathbf{M}$: If $T \vdash \varphi \to \psi$, then $T \vdash \PR_T(\gn{\varphi}) \to \PR_T(\gn{\psi})$,
\item $\mathbf{C}$: $T \vdash (\PR_T(\gn{\varphi}) \land \PR_T(\gn{\psi})) \to \PR_T(\gn{\varphi \land \psi})$.
\end{itemize}
In particular, the conditions $\mathbf{E}$, $\mathbf{M}$, and $\mathbf{C}$ correspond to well-known principles in the context of non-normal modal logics, and it was shown in \cite{Kura25} that these conditions play a crucial role in establishing G2.
Specifically, the following statements were proved:

\begin{thm}[Kurahashi \cite{Kura25}]\label{Kura}
Let $m > n \geq 1$. 
\begin{enumerate}
\item If $T \vdash \ConS_T$, then $\mathbf{Ros}$ holds. Conversely, if $\PR_T(x)$ satisfies $\mathbf{C}$, then $\mathbf{Ros}$ implies $T \vdash \ConS_T$.
\item If $\mathbf{Ros}$ holds, then $T \vdash \ConL_T$. Conversely, if $\PR_T(x)$ satisfies $\mathbf{E}$, then $T \vdash \ConL_T$ implies $\mathbf{Ros}$.
\item If $\PR_T(x)$ satisfies $\mathbf{E}$ and $\D{3}$, then $T \nvdash \ConS_T$.
\item If $\PR_T(x)$ satisfies $\mathbf{C}$ and $\D{3}^n_m$, then $\mathbf{Ros}$ fails to hold.
\item If $\PR_T(x)$ satisfies $\mathbf{E}$, $\mathbf{C}$, and $\D{3}^n_m$, then $T \nvdash \ConL_T$.
\end{enumerate}
\end{thm}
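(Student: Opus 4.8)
The plan is to prove each of the five items largely by formalizing the classical Gödel–Rosser and Jeroslow–Kreisel–Takeuti arguments inside $T$, paying close attention to exactly which derivability condition is invoked at each step.

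For item (1), the forward direction is immediate: assuming $T \vdash \ConS_T$, if $T \vdash \neg\varphi$ then by the definition of a provability predicate $\PA \vdash \PR_T(\gn{\neg\varphi})$, hence $T \vdash \PR_T(\gn{\neg\varphi})$; combining this with the instance $\neg(\PR_T(\gn{\varphi}) \land \PR_T(\gn{\neg\varphi}))$ of $\ConS_T$ yields $T \vdash \neg\PR_T(\gn{\varphi})$, which is $\mathbf{Ros}$. For the converse, assume $\mathbf{C}$ and $\mathbf{Ros}$. Fix a sentence $\varphi$. Since $T \vdash \neg(\varphi \land \neg\varphi)$, i.e.\ $T \vdash \neg(\varphi \land \neg\varphi)$, the Rosser rule gives $T \vdash \neg\PR_T(\gn{\varphi \land \neg\varphi})$; then $\mathbf{C}$ applied to $\varphi$ and $\neg\varphi$ gives $T \vdash \PR_T(\gn{\varphi}) \land \PR_T(\gn{\neg\varphi}) \to \PR_T(\gn{\varphi \land \neg\varphi})$, and contraposition yields the desired instance of $\ConS_T$. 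Item (2) is similar but lighter: from $\mathbf{Ros}$ and $T \vdash \neg(0=1)$ we get $T \vdash \neg\PR_T(\gn{0=1})$, which is $\ConL_T$; conversely, assuming $\mathbf{E}$ and $T \vdash \ConL_T$, given $T \vdash \neg\varphi$ we have $T \vdash \varphi \leftrightarrow 0=1$, so $\mathbf{E}$ gives $T \vdash \PR_T(\gn{\varphi}) \leftrightarrow \PR_T(\gn{0=1})$, and with $\ConL_T$ we obtain $\mathbf{Ros}$.

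Items (3)--(5) are the substantive unprovability results, and here the strategy is a fixed-point construction. For item (3), take by the diagonal lemma a sentence $\rho$ with $T \vdash \rho \leftrightarrow \neg\PR_T(\gn{\rho})$ (a Gödel-style sentence); the point is to show $T$ cannot prove the instance $\neg(\PR_T(\gn{\rho}) \land \PR_T(\gn{\neg\rho}))$ of $\ConS_T$ — or rather, to derive a contradiction with consistency from the assumption that $T$ proves all of $\ConS_T$. Using $\mathbf{E}$ to transfer provability across the fixed-point equivalence and $\D{3}$ to iterate the provability predicate, one reconstructs inside $T$ the argument that $T \vdash \PR_T(\gn{\rho})$ would follow, contradicting Gödel's first theorem. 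Items (4) and (5) run the analogous argument using $\D{3}^n_m$ in place of $\D{3}$: the condition $T \vdash \PR_T^n(\gn{\varphi}) \to \PR_T^m(\gn{\varphi})$ with $m > n$ lets one "descend" in the tower of boxes rather than ascend, and this is precisely what is needed to show that $\mathbf{Ros}$ or $\ConL_T$ would force $T$ to prove a Gödel sentence (or its negation). Here $\mathbf{C}$ is needed to combine $\PR_T(\gn{\varphi})$ with $\PR_T(\gn{\neg\varphi})$ into $\PR_T(\gn{\varphi\land\neg\varphi}) = \PR_T(\gn{0=1})$, and $\mathbf{E}$ (in item (5)) to handle the fixed-point equivalence. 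The main obstacle, and the place requiring genuine care, is the bookkeeping in items (4) and (5): one must choose the fixed-point sentence so that the specific indices $n$ and $m$ line up with the number of times the provability predicate gets nested when one chases through $\mathbf{Ros}$ and $\mathbf{C}$, and verify that the resulting derivation stays within $T$ rather than merely holding in the standard model — i.e.\ that every step is a genuine $T$-proof, using $\D{3}^n_m$ as an internal implication rather than an external inference. I would organize the argument so that the common skeleton of (3)--(5) is isolated once and the three cases differ only in which iterate identity is plugged in.
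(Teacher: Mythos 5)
First, a point of reference: the paper does not prove Theorem~\ref{Kura} at all --- it is quoted from \cite{Kura25} --- so there is no in-paper proof to compare yours against, and I can only judge your argument on its own terms. Your items (1) and (2) are correct and complete. Items (3)--(5), however, are a strategy outline rather than a proof, and the outline has concrete gaps. In item (3) the decisive step is to identify \emph{which} instance of $\ConS_T$ is used, and the one you name, $\neg(\PR_T(\gn{\rho}) \land \PR_T(\gn{\neg\rho}))$, is not the one that works; the working instance is the one for the sentence $\PR_T(\gn{\rho})$ itself. Concretely: from $T \vdash \rho \leftrightarrow \neg\PR_T(\gn{\rho})$, the rule $\mathbf{E}$ gives $T \vdash \PR_T(\gn{\rho}) \to \PR_T(\gn{\neg\PR_T(\gn{\rho})})$, the condition $\D{3}$ gives $T \vdash \PR_T(\gn{\rho}) \to \PR_T(\gn{\PR_T(\gn{\rho})})$, and the instance $\neg\bigl(\PR_T(\gn{\PR_T(\gn{\rho})}) \land \PR_T(\gn{\neg\PR_T(\gn{\rho})})\bigr)$ of $\ConS_T$ then forces $T \vdash \neg\PR_T(\gn{\rho})$, hence $T \vdash \rho$, hence $T \vdash \PR_T(\gn{\rho})$, contradicting consistency. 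Without isolating this step the argument does not close.

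The more serious gap is item (4). There $\mathbf{E}$ is not among the hypotheses, so the fixed-point equivalence supplied by the diagonal lemma cannot be transferred inside $\PR_T$; your proposed ``common skeleton'' for (3)--(5), which explicitly relies on handling the fixed-point equivalence under the box, is unavailable precisely in the case where the bookkeeping with $\D{3}^n_m$ is hardest. The equivalence $T \vdash \rho \leftrightarrow \neg\PR_T^n(\gn{\rho})$ may only be used outside all occurrences of $\PR_T$, and the argument has to be rebuilt around $\mathbf{C}$ (for instance via the schema: if $T \vdash \neg(\varphi_1 \land \cdots \land \varphi_k)$ then $T \vdash \neg(\PR_T(\gn{\varphi_1}) \land \cdots \land \PR_T(\gn{\varphi_k}))$, which follows from $\mathbf{C}$ and $\mathbf{Ros}$). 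You acknowledge that aligning the indices $n$ and $m$ is ``the main obstacle'' but do not carry it out, so item (4) remains unproved; the remark that $\D{3}^n_m$ lets one ``descend'' in the tower of boxes is also backwards as stated, since with $m > n$ it passes from fewer to more iterations. Finally, item (5) needs no new diagonal argument: under $\mathbf{E}$, item (2) converts $T \vdash \ConL_T$ into $\mathbf{Ros}$, which item (4) rules out under $\mathbf{C}$ and $\D{3}^n_m$. Rerunning the fixed-point construction for (5), as you propose, duplicates the hard work and hides this one-line reduction.
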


\subsection{Modal logics associated with derivability conditions}

These derivability conditions correspond to particular modal axioms and rules, and this correspondence forms a hierarchy of modal logics.  
We have been investigating these modal logics focusing on their arithmetical completeness. 
Let $\PR_T(x)$ be a provability predicate of $T$. 
A mapping $f$ from the set of all modal formulas to a set of $\LA$-sentences is called an \emph{arithmetical interpretation} based on $\PR_T(x)$ if $f$ satisfies the following clauses: 
\begin{enumerate}
    \item $f(\bot) \equiv 0 = 1$, 
    \item $f(A \circ B) \equiv (f(A) \circ f(B))$ for $\circ \in \{\land, \lor, \to\}$, 
    \item $f(\neg A) \equiv \neg f(A)$, 
    \item $f(\Box A) \equiv \PR_T(\gn{f(A)})$. 
\end{enumerate}
The \emph{provability logic} of $\PR_T(x)$ is the set of all modal formulas $A$ such that $T \vdash f(A)$ for all arithmetical interpretations $f$ based on $\PR_T(x)$. 
A logic $L$ is \emph{arithmetically complete} if there exists a provability predicate $\PR_T(x)$ such that $L$ is the provability logic of $\PR_T(x)$. 
See Boolos \cite{Boolos}, Japaridze and de Jongh \cite{JD}, and Artemov and Beklemishev \cite{AB} for the details of provability logic. 

\paragraph{(1) The pure logic of necessitation $\N$ and its extensions.}
The pure logic of necessitation $\N$ is obtained by adding the necessitation rule \textsc{Nec}: $\dfrac{A}{\Box A}$ to classical propositional logic in the language of modal propositional logic.
The rule \textsc{Nec} corresponds to the derivability condition $\D{1}$, which is a property common to all provability predicates.
The logic $\N$ was introduced by Fitting, Marek, and Truszczy\'nski~\cite{FMT}, and they also developed a Kripke-like relational semantics for $\mathbf{N}$. 
The second author \cite{Kura24} proved that $\N$ is exactly the provability logic of all provability predicates by using the finite frame property of $\N$ with respect to their relational semantics.
The modal axiom $\mathsf{4}$: $\Box A \to \Box\Box A$ and the rule \textsc{Ros}: $\dfrac{\neg A}{\neg \Box A}$ correspond respectively to $\D{3}$ and $\mathbf{Ros}$.
The arithmetical completeness of the extensions $\mathsf{N4}$, $\mathsf{NR}$, and $\mathsf{NR4}$ obtained by adding these principles to $\N$ was established in \cite{Kura24}.

\paragraph{(2) Variations of $\mathsf{N4}$.}
The derivability condition $\D{3}^n_m$ is a generalization of $\D{3}$, and its modal counterpart is the Geach-type axiom $\mathsf{Acc}_{m,n}: \Box^n A \to \Box^m A$. 
For every $m, n \geq 1$, the finite frame property of the logic $\mathsf{NA}_{m,n}$ obtained by adding $\mathsf{Acc}_{m,n}$ to $\N$ was established by the second author and Sato \cite{KS}. 
The provability logical analysis of the logic $\mathsf{NA}_{m,n}$ was developed by the first author \cite{Kogure1}.
In particular, the arithmetical completeness of $\mathsf{NA}_{m,n}$ for all $m,n \ge 1$ was proved.

\paragraph{(3) On the rule $\mathbf{E}$.}
The first author \cite{Kogure2} investigated extensions of the logic~$\mathsf{E}$, which is obtained from classical propositional logic by adding the rule \textsc{RE}: $\dfrac{A \leftrightarrow B}{\Box A \leftrightarrow \Box B}$ corresponding to the derivability condition $\mathbf{E}$.
For these logics, the \emph{neighborhood semantics} (see~\cite{Chellas, Pacuit}) is well developed as the basic semantics.
Therefore, in order to prove the arithmetical completeness for these systems, one cannot directly apply the standard techniques used in the proof of Solovay's arithmetical completeness theorem, which rely on relational frames.
The first author developed a method of embedding neighborhood models into arithmetic and actually proved the arithmetical completeness of the logic $\mathsf{EN}$.
Furthermore, the modal principles corresponding respectively to $\ConL_T$ and $\ConS_T$ are $\mathsf{P}$: $\neg \Box \bot$ and $\mathsf{D}$: $\neg(\Box A \land \Box \neg A)$.
The first author also established the arithmetical completeness of the extensions $\mathsf{ENP}$, $\mathsf{ECN}$, and $\mathsf{ECND}$ of $\mathsf{EN}$. 
It should be noted that under the rule \textsc{RE}, the axiom $\mathsf{P}$ and the rule \textsc{Ros} are equivalent, and under the axiom $\mathsf{C}: (\Box p \land \Box q) \to \Box (p \land q)$, the rule \textsc{Ros} and the axiom $\mathsf{D}$ are equivalent.
However, the arithmetical completeness of the logics $\mathsf{EN4}$, 
$\mathsf{ENP4}$, and $\mathsf{ECN4}$, which include the axiom $\mathsf{4}$, 
remains open, since difficulties arise when embedding the corresponding 
neighborhood models into arithmetic.
The usual Solovay-style arithmetical completeness argument is essentially
based on relational semantics, where the behavior of $\Box$ is controlled by
accessibility relations. 
By contrast, logics based on the rule \textsc{RE} are naturally treated by
neighborhood semantics, and the axiom $\mathsf{4}$ does not simply correspond to
transitivity of a binary relation in that setting. 
This is one source of difficulty in proving arithmetical completeness for the
\textsc{RE}-based logics with $\mathsf{4}$.
Theorem \ref{Kura}.3 is a version of G2 stating that there is no provability predicate satisfying $\mathbf{E}$ and $\D{3}$ such that $T \vdash \ConS_T$, but this fact does not immediately imply that $\mathsf{END4}$ is not arithmetically complete.

\paragraph{(4) On the rule $\mathbf{M}$.}
The logic $\mathsf{MN}$ is obtained from $\N$ by adding the rule \textsc{RM}: $\dfrac{A \to B}{\Box A \to \Box B}$. 
The provability logical study of the logic $\mathsf{MN}$ and its extensions was developed by the authors \cite{KK}.
Fortunately, the neighborhood frames for these logics can be transformed into suitable Kripke-like relational frames, and this relational semantics enables one to establish their arithmetical completeness by applying Solovay's technique.
In fact, the arithmetical completeness of the logics $\mathsf{MN}$, $\mathsf{MNP}$, $\mathsf{MND}$, $\mathsf{MN4}$, and $\mathsf{MNP4}$ was obtained in \cite{KK}.
As in the case of $\mathbf{E}$, there is no provability predicate satisfying $\mathbf{M}$ and $\D{3}$ such that $T \vdash \ConS_T$, but this fact may not imply that $\mathsf{MND4}$ is not arithmetically complete.

\paragraph{(5) Normal modal logics.}
It is well known that the combination of the rules \textsc{Nec} and \textsc{RM} and the axiom $\mathsf{C}$ yields the axiom $\Box(A \to B) \to (\Box A \to \Box B)$.
So, by adding the axiom $\mathsf{C}$ to the logics $\mathsf{MN}$ and $\mathsf{MND}$, one obtains the normal modal logics $\mathsf{K}$ and $\mathsf{KD}$, respectively.
For the modal logic~$\mathsf{K}$, the second author \cite{Kura18} proved its arithmetical completeness with respect to Fefermanian $\Sigma_2$ provability predicates introduced in \cite{Fef}, and later the second author \cite{Kura24} established its arithmetical completeness with respect to $\Sigma_1$ provability predicates.
For~$\mathsf{KD}$, the existence of a Fefermanian $\Sigma_2$ provability predicate whose $\ConL_T$ is $T$-provable was proved by Feferman \cite{Fef}. 
Later, the existence of Rosser provability predicates satisfying $\D{2}$ was proved by Bernardi and Montagna \cite{BM84} and Arai \cite{Arai} (see also \cite{Viss89,Shav94}). 
The arithmetical completeness of $\mathbf{KD}$ with respect to Rosser provability predicates satisfying~$\D{2}$ was proved by the second author \cite{Kura20b}.

On the other hand, the modal logic~$\mathsf{K4}$ corresponds to the derivability conditions~$\D{1}$–$\D{3}$, and for any provability predicate $\PR_T(x)$ satisfying these conditions, the formalized L\"ob's theorem ensures that the provability logic of $\PR_T(x)$ contains $\GL$.
Hence, $\mathsf{K4}$ is not arithmetically complete.
Montague~\cite{Mont} showed that any logic containing~$\mathsf{KT}$ is not arithmetically complete.
Furthermore, it is proved that any logic containing $\mathsf{KD4} \cap \mathsf{KD5} \cap \mathsf{KT}$ is not arithmetically complete \cite{Kura20b}, and that among the logics containing $\mathsf{KB} \cap \mathsf{K5}$, the only arithmetically complete logic is $\mathsf{Ver} := \mathsf{K} + \Box \bot$ \cite{Kura18b}.

Similarly, for each $n \geq 1$, Sacchetti \cite{Sacc} showed that the provability logic of any provability predicate satisfying $\D{2}$ and $\D{3}^1_{n+1}$ contains the logic $\mathsf{K} + (\Box(\Box^n A \to A) \to \Box A)$.
In this sense, the logic $\mathsf{K} + (\Box A \to \Box^{n+1} A)$ cannot be a provability logic.
On the other hand, the second author \cite{Kura18b} proved that $\mathsf{K} + (\Box(\Box^n A \to A) \to \Box A)$ is arithmetically complete with respect to Fefermanian $\Sigma_2$ provability predicates.

\subsection{Remaining open problems and the goal of this paper}

In the above studies, the logics whose arithmetical completeness has not yet been established can be divided into three groups.
The first group consists of $\mathsf{EN4}$, $\mathsf{ENP4}$, and $\mathsf{ECN4}$, as mentioned above.
The difficulty of proving the arithmetical completeness for these logics indicates that our research is not merely a matter of routine extension, but involves technically and conceptually challenging aspects. 
Indeed, it is also possible that logics such as $\mathsf{EN4}$ are not arithmetically complete.
In that case, a new principle in arithmetic may be derived from $\mathbf{E}$ and $\D{3}$ by applying the Fixed-Point Theorem (cf.~\cite{Lind}). 
Such a situation would be interesting.
The existence of a $\Sigma_1$ provability predicate satisfying $\mathbf{E}$, $\mathbf{C}$, and $\D{3}$, but not $\mathbf{M}$ was proved by the second author \cite{Kura25}, which corresponds to the logic $\mathsf{ECN4}$. 

The second group consists of $\mathsf{CN}$, $\mathsf{CNP}$, $\mathsf{CND}$, $\mathsf{CN4}$, and $\mathsf{CNP4}$.
The modal completeness with respect to the semantics developed by Fitting, Marek, and Truszczy\'{n}ski for logics including the axiom $\mathsf{C}$ has not yet been established, and therefore the arithmetical completeness for these logics remains open. 
It follows from Theorem \ref{Kura}.4 that the logic $\mathsf{CND4}$ is not arithmetically complete. 
It is easy to see that Mostowski's provability predicate \cite[p.~24]{Mos65} satisfies $\mathbf{C}$, $\D{3}$, and $T \vdash \ConL_T$, and so it corresponds to the logic $\mathsf{CNP4}$.

The final group, and the main focus of the present paper, consists of $\NP$, $\ND$, $\NPF$, and $\NDF$.
The present paper is a continuation of the line of research reviewed in this section, and its aim is to settle the arithmetical completeness of these logics.
To achieve this, two main difficulties must be overcome.
The first is to establish the modal completeness and the finite frame property of these logics with respect to the semantics of Fitting, Marek, and Truszczy\'{n}ski.
This requires special care not discussed in the previous related works \cite{FMT, KS}, and the details are presented in Section \ref{MC}.
The second is to prove the arithmetical completeness of these logics themselves.
In particular, the case of $\NDF$ requires an essential new idea.
Arai~\cite{Arai} showed the existence of a Rosser provability predicate $\PRR_T(x)$ whose provability logic includes $\NDF$, but unfortunately, it is easily seen that the provability logic of his $\PRR_T(x)$ strictly includes $\NDF$ (for example, it contains $\Box \neg\neg p \leftrightarrow \Box p$, which is not provable in $\NDF$).
Hence, we proved the arithmetical completeness of $\NDF$ by applying Arai's construction method only to formulas lying outside the arithmetical interpretation.
We prove the arithmetical completeness theorem for $\ND$ and $\NDF$ in Section \ref{AC1}. 
Section \ref{AC2} is devoted to proving the arithmetical completeness of $\NP$ and $\NPF$. 

The current situation is summarized in Table \ref{Table1}. 
In the table, `AC' stands for `Arithmetical Completeness'. 
Also, `$\exists$' stands for the existence of a provability predicate satisfying exactly the pattern of conditions specified in that row. 
In the columns for the derivability conditions and consistency principles, + indicates that the corresponding condition or principle is satisfied, and - indicates that it is not. 
For the columns `AC' and `$\exists$', the symbol - means, respectively, that the corresponding logic is not arithmetically complete and that there is no provability predicate satisfying the specified pattern of conditions.

\begin{table}[ht]
\centering
{\scriptsize
\begin{tabular}{|c|c|c||c|c|c|c|c|c|c|}
\hline
Logic & AC & $\exists$ & $\mathbf{E}$ & $\mathbf{M}$ & $\mathbf{C}$ & $\mathbf{D3}$ & $\vdash \ConL_T$ & $\mathbf{Ros}$ & $\vdash \ConS_T$ \\
\hline
\hline
$\N$ & \cite{Kura24}  &  \cite{Kura24} &  - & - & - & - & - & - & - \\
\hline
$\NP$ & Cor.~\ref{AC_NP}  & $\checkmark$ & - & - & - & - & + & - & - \\
\hline
$\mathsf{NR}$ & \cite{Kura24}  & \cite{Shavrukov,Kura24} & - & - & - & - & + & + & - \\
\hline
$\ND$ & Cor.~\ref{AC_ND}  & $\checkmark$ & - & - & - & - & + & + & + \\
\hline
$\mathsf{N4}$ & \cite{Kura24,Kogure1}  & \cite{Kura24,Kogure1} & - & - & - & + & - & - & - \\
\hline
$\NPF$ & Cor.~\ref{AC_NP4}  & $\checkmark$ & - & - & - & + & + & - & - \\
\hline
$\mathsf{NR4}$ & \cite{Kura24}  & \cite{Kura24} & - & - & - & + & + & + & - \\
\hline
$\NDF$ & Cor.~\ref{AC_ND4}  & \cite{Arai} & - & - & - & + & + & + & + \\
\hline
$\mathsf{CN}$  &  &  & - & - & + & - & - & - & - \\
\hline
$\mathsf{CNP}$  &  &  & - & - & + & - & + & - & - \\
\hline
$\mathsf{CND}$  &  &  & - & - & + & - & + & + & + \\
\hline
$\mathsf{CN4}$  &  &  & - & - & + & + & - & - & - \\
\hline
$\mathsf{CNP4}$  &  & \cite{Mos65} & - & - & + & + & + & - & - \\
\hline
$\mathsf{CND4}$ & - & - \cite{Kura25} & - & - & + & + & + & + & + \\
\hline
$\mathsf{EN}$ & \cite{Kogure2}  & \cite{Kogure2} & + & - & - & - & - & - & - \\
\hline
$\mathsf{ENP}$ & \cite{Kogure2}  & \cite{Kogure2} & + & - & - & - & + & + & - \\
\hline
$\mathsf{END}$ & \cite{Kogure2}  & \cite{Kogure2} & + & - & - & - & + & + & + \\
\hline
$\mathsf{EN4}$ &   &  & + & - & - & + & - & - & - \\
\hline
$\mathsf{ENP4}$ &   &  & + & - & - & + & + & + & - \\
\hline
$\mathsf{END4}$ &   & - \cite{Kura25} & + & - & - & + & + & + & + \\
\hline
$\mathsf{ECN}$ & \cite{Kogure2}  & \cite{Kogure2}  & + & - & + & - & - & - & - \\
\hline
$\mathsf{ECND}$ & \cite{Kogure2}  & \cite{Kogure2}  & + & - & + & - & + & + & + \\
\hline
$\mathsf{ECN4}$  &  & \cite{Kura25} & + & - & + & + & - & - & - \\
\hline
$\mathsf{MN}$ & \cite{KK}  & \cite{KK} & + & + & - & - & - & - & - \\
\hline
$\mathsf{MNP}$ & \cite{KK}  & \cite{KK} & + & + & - & - & + & + & - \\
\hline
$\mathsf{MND}$ & \cite{KK}  & \cite{KK} & + & + & - & - & + & + & + \\
\hline
$\mathsf{MN4}$ & \cite{KK}  & \cite{KK} & + & + & - & + & - & - & - \\
\hline
$\mathsf{MNP4}$ & \cite{KK}  & \cite{Kura21, KK} & + & + & - & + & + & + & - \\
\hline
$\mathsf{MND4}$ &  & - \cite{Kura20} & + & + & - & + & + & + & + \\
\hline
$\mathsf{K}$ & \cite{Kura18,Kura24}  & \cite{Kura18,Kura24} & + & + & + & - & - & - & - \\
\hline
$\mathsf{KD}$ & \cite{Kura20b}  & \cite{Fef,BM84,Arai,Kura20b} & + & + & + & - & + & + & + \\
\hline
\end{tabular}
}
\caption{The summary of the current situation}\label{Table1}
\end{table}

\section{Modal completeness}\label{MC}

Let $\mathsf{Prop}$ be a countably infinite set of propositional variables. The set $\mathsf{MF}$ of all modal formulas is defined by the following grammar:
\[
A ::= p \mid \bot \mid (A \land A) \mid (A \lor A) \mid (A \to A)
      \mid \neg A \mid \Box A,
\]
where $p \in \mathsf{Prop}$.
Let $\top$ be an abbreviation for $\neg\bot$.

Fitting, Marek, and Truszczy\'{n}ski \cite{FMT} introduced the pure logic of necessitation $\N$,
which is obtained by adding the necessitation rule \textsc{Nec}: $\dfrac{A}{\Box A}$ to classical propositional logic in this language. 
The logic $\N$ is not normal because the distribution axiom $\Box (p \to q) \to (\Box p \to \Box q)$ is not provable in $\N$.
They introduced the following relational semantics for $\N$.

\begin{defn}[$\N$-frames and $\N$-models]
\leavevmode

\begin{itemize}
\item 
A tuple $(W, \{ \prec_B \}_{B \in \MF} )$ is called an $\N$-\textit{frame}
if $W$ is a non-empty set and $\prec_B$ is a binary relation on $W$ for every $B \in \MF$. 
\item 
An $\N$-frame $(W, \{ \prec_B \}_{B \in \MF} )$ is said to be finite if $W$ is finite.
\item 
A triple $(W, \{ \prec_B \}_{B \in \MF}, \Vdash)$ is called an $\N$-\textit{model}
if $(W, \{ \prec_B \}_{B \in \MF} )$ is an $\N$-frame and $\Vdash$ is a binary relation
between $W$ and $\MF$ satisfying the usual conditions for satisfaction and the following condition:
\[
    x \Vdash \Box B \iff \forall y \in W \, (x \prec_B y \Longrightarrow y \Vdash B).
\]
\item 
A formula $A$ is \textit{valid} in an $\N$-model $(W, \{ \prec_B \}_{B \in \MF}, \Vdash)$
if $x \Vdash A$ for all $x \in W$.
\item 
A formula $A$ is \textit{valid} in an $\N$-frame $ \mathcal{F} = (W, \{ \prec_B \}_{B \in \MF})$
if $A$ is valid in all $\N$-models $(\mathcal{F}, \Vdash)$ based on $\mathcal{F}$.
\end{itemize}
\end{defn}

Fitting, Marek, and Truszczy\'{n}ski proved that
$\N$ is sound and complete with respect to their semantics.
In addition, $\N$ has the finite frame property with respect to that semantics.

\begin{thm}[{\cite[Theorems 3.6 and 4.10]{FMT}}]
$\N$ is sound and complete with respect to the class of all $\N$-frames. 
Moreover, $\N$ has the finite frame property. 
\end{thm}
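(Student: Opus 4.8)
The plan is to prove soundness by a routine induction on derivations, and to obtain completeness together with the finite frame property through a canonical-model construction followed by a filtration. For soundness, fix an $\N$-model $\mathcal{M}=(W,\{\prec_B\}_{B\in\MF},\Vdash)$. Since $\Vdash$ treats the Boolean connectives classically at every world, all propositional tautologies are valid in $\mathcal{M}$, and modus ponens obviously preserves validity. For \textsc{Nec}: if $A$ is valid in $\mathcal{M}$, then $y\Vdash A$ for every $y\in W$, so for every $x\in W$ and every $y$ with $x\prec_A y$ we have $y\Vdash A$; by the satisfaction clause for $\Box$ this yields $x\Vdash\Box A$, hence $\Box A$ is valid in $\mathcal{M}$. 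Thus every theorem of $\N$ is valid in every $\N$-frame.

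For completeness I would build the canonical model $\mathcal{M}^{c}=(W^{c},\{\prec_B^{c}\}_{B\in\MF},\Vdash^{c})$, where $W^{c}$ is the set of all maximal $\N$-consistent sets of modal formulas (nonempty by a Lindenbaum-style argument), $x\Vdash^{c}p\iff p\in x$ for propositional variables, and
\[
x\prec_B^{c}y\ :\Longleftrightarrow\ (\Box B\in x\ \Rightarrow\ B\in y).
\]
The clause for $\Box$ holds by construction, so $\mathcal{M}^{c}$ is an $\N$-model. The key step is the truth lemma $x\Vdash^{c}A\iff A\in x$, proved by induction on $A$; the Boolean cases are the usual maximal-consistency bookkeeping, and for $A=\Box B$: if $\Box B\in x$ then $B\in y$ for every $\prec_B^{c}$-successor $y$ of $x$ by definition, so $x\Vdash^{c}\Box B$ by the induction hypothesis; conversely, if $\Box B\notin x$ then $\N\nvdash B$ (otherwise \textsc{Nec} would place $\Box B$ in every maximal $\N$-consistent set), hence $\{\neg B\}$ is $\N$-consistent and extends to some $y\in W^{c}$ with $B\notin y$, while $x\prec_B^{c}y$ holds vacuously, so $x\nVdash^{c}\Box B$. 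Completeness follows: if $\N\nvdash A$, pick $x\in W^{c}$ with $\neg A\in x$; then $A\notin x$, so $x\nVdash^{c}A$, and $A$ is refuted on the underlying $\N$-frame.

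For the finite frame property, given $A$ with $\N\nvdash A$, let $\Sigma$ be the (finite) set of subformulas of $A$ and filtrate $\mathcal{M}^{c}$ through $\Sigma$. Put $W^{f}=W^{c}/{\sim}$ with $x\sim y:\iff x\cap\Sigma=y\cap\Sigma$, which is finite since $|W^{f}|\le 2^{|\Sigma|}$; for $B$ with $\Box B\in\Sigma$ (hence also $B\in\Sigma$, as $\Sigma$ is subformula-closed) set $[x]\prec_B^{f}[y]:\iff(\Box B\in x\Rightarrow B\in y)$, which is well defined on $\sim$-classes because membership of $\Box B$ and of $B$ depends only on the $\Sigma$-part of a world, and set $\prec_B^{f}=\emptyset$ for every other $B$; finally let $[x]\Vdash^{f}p\iff p\in x$ for $p\in\Sigma$ and $[x]\nVdash^{f}p$ otherwise. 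This determines a finite $\N$-model: when $\prec_B^{f}=\emptyset$ the $\Box$-clause simply forces $[x]\Vdash^{f}\Box B$ for all $[x]$, which is harmless. The filtration lemma $[x]\Vdash^{f}C\iff C\in x$ for $C\in\Sigma$ is again proved by induction on $C$, with the $\Box$-case handled exactly as in the canonical model (using the same vacuous-successor trick for the converse direction). Since some $x\in W^{c}$ omits $A\in\Sigma$, we get $[x]\nVdash^{f}A$, so $A$ is refuted on a finite $\N$-frame.

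I do not expect a genuine obstacle here: indexing the accessibility relations by formulas eliminates the usual ``existence lemma'' difficulty and makes both the canonical model and the filtration essentially mechanical. The only points that require attention are choosing the definition of $\prec_B$ correctly and, in the filtration, observing that the relations $\prec_B$ for formulas $B$ not among the subformulas of $A$ may simply be taken empty while the $\Box$-clause is still met for every $B\in\MF$. The real difficulties arise only for the stronger systems $\NP$, $\ND$, $\NPF$, and $\NDF$, where the frame conditions constrain these choices and the above constructions must be refined.
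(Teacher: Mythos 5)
Your argument is correct; the paper does not prove this theorem itself (it is imported from Fitting--Marek--Truszczy\'nski), but your construction --- relations $x \prec_B y \iff (\Box B \in x \Rightarrow B \in y)$, the truth lemma using the vacuous-successor observation when $\Box B \notin x$, and restriction to a finite subformula-closed set --- is exactly the mechanism the paper itself deploys in Section~\ref{MC} for $\NP$, $\ND$, $\NPF$, and $\NDF$. The only cosmetic difference is that you filtrate an infinite canonical model whereas the paper builds the finite model directly from maximally consistent subsets of the finite closure set; both routes produce the same finite frame and the same countermodel.
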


We introduce the following seven logics that are extensions of $\N$. 
The logics $\mathsf{N4}$, $\mathsf{NR}$, and $\mathsf{NR4}$ were introduced in \cite{Kura24}. 
The logics $\NP$, $\ND$, $\NPF$, and $\NDF$ are the main focus of our present paper. 

\begin{defn}
\leavevmode
\begin{itemize}
    \item $\NP : = \N + \neg \Box \bot$.
    \item $\mathsf{NR} : = \N + \dfrac{\neg A}{\neg \Box A}$.
    \item $\ND : = \N + \neg (\Box A \wedge \Box \neg A)$.
    \item $\mathsf{N4} : = \N + (\Box A \to \Box \Box A)$.
    \item $\NPF : = \NP + (\Box A \to \Box \Box A)$.
    \item $\mathsf{NR4} : = \mathsf{NR} + (\Box A \to \Box \Box A)$.
    \item $\NDF : = \ND + (\Box A \to \Box \Box A)$. 
\end{itemize}
\end{defn}

It is easy to see that $\NP \subseteq \mathsf{NR} \subseteq \ND$. 
Although $\neg \Box \bot$, $\dfrac{\neg A}{\neg \Box A}$, and $\neg (\Box A \land \Box \neg A)$ are equivalent over normal modal logics, they are distinguished over the weak logic $\N$. 
Semantical analysis of the logics $\mathsf{N4}$, $\mathsf{NR}$, and $\mathsf{NR4}$ has already been developed in \cite{Kura24}.

\begin{defn}
Let $\mathcal{F} = (W, \{ \prec_B\}_{B \in \MF})$ be an $\N$-frame and $\Gamma \subseteq \MF$.
\begin{itemize}
    \item 
    $\mathcal{F}$ is said to be $\Gamma$-transitive if for any $\Box \Box A \in \Gamma$ and $x,y, z \in W$, if $x \prec_{\Box A} y$ and $y \prec_{A}z$, then $x \prec_{A} z$.
    \item 
    We say that $\mathcal{F}$ is transitive if it is $\MF$-transitive.

    \item 
    $\mathcal{F}$ is said to be serial if for any $A \in \MF$ and $x \in W$, there exists $y \in W$ such that $x \prec_A y$. 
\end{itemize}
\end{defn}

\begin{prop}[cf.~{\cite[Proposition 3.6]{Kura24}}]\label{propN0}
For any $A \in \MF$, if $\neg A$ is valid in a serial $\N$-frame $\mathcal{F}$, then $\neg \Box A$ is also valid in $\mathcal{F}$.
\end{prop}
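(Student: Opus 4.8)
The plan is to argue by contraposition on the validity of $\neg \Box A$ in $\mathcal{F}$. Suppose $\neg \Box A$ is not valid in $\mathcal{F}$; then there is an $\N$-model $(\mathcal{F}, \Vdash)$ and a world $x \in W$ with $x \Vdash \Box A$. By the satisfaction clause for $\Box$, this means $y \Vdash A$ for every $y \in W$ with $x \prec_A y$. Since $\mathcal{F}$ is serial, there is at least one such $y$, so we obtain a world $y \in W$ with $y \Vdash A$ in the same model. Hence $\neg A$ is not valid in $(\mathcal{F}, \Vdash)$, and therefore $\neg A$ is not valid in $\mathcal{F}$. This is exactly the contrapositive of the claim.

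Concretely, I would carry out the steps in this order. First, unpack the definition of ``$\neg \Box A$ is valid in $\mathcal{F}$'': it means that for every $\N$-model $(\mathcal{F}, \Vdash)$ based on $\mathcal{F}$, every world forces $\neg \Box A$, i.e.\ no world forces $\Box A$. Negating this, fix a model $(\mathcal{F}, \Vdash)$ and a world $x$ with $x \Vdash \Box B$ where $B = A$. Second, invoke seriality applied to the formula $A$ and the world $x$ to produce $y$ with $x \prec_A y$. Third, apply the $\Box$-clause $x \Vdash \Box A \iff \forall z\,(x \prec_A z \Rightarrow z \Vdash A)$ to conclude $y \Vdash A$, so $y \not\Vdash \neg A$. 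Fourth, conclude that $\neg A$ fails in this $\N$-model on $\mathcal{F}$, hence $\neg A$ is not valid in $\mathcal{F}$.

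There is really no substantial obstacle here: the statement is essentially a direct unfolding of the seriality condition together with the truth clause for the box, and it plays the role of a ``sanity lemma'' justifying that the Rosser-type rule $\dfrac{\neg A}{\neg \Box A}$ is sound over serial $\N$-frames. The only point requiring the slightest care is bookkeeping the indexed relation: the $\Box$-clause for $\Box A$ refers specifically to $\prec_A$, and seriality is stated uniformly for every formula, so one must apply seriality to the index $A$ (not to some other formula) to get a witness $y$ that actually lies in the relevant relation $\prec_A$. Once that matching is observed, the argument closes immediately.
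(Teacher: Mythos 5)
Your proof is correct and is essentially the standard argument (the paper itself only cites \cite{Kura24} for this; the underlying proof there is the same fixed-model argument: a world forcing $\Box A$ plus seriality of $\prec_A$ yields a world forcing $A$). You also correctly handle the one subtle point, namely that frame validity quantifies over all models, so the counterexample to $\neg\Box A$ and the counterexample to $\neg A$ must live in the same model — which they do in your argument.
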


\begin{prop}[cf.~{\cite[Proposition 3.9]{Kura24}}]\label{propN1}
For any $A \in \MF$, the formula $\Box A \to \Box \Box A$ is valid in all transitive $\N$-frames.
\end{prop}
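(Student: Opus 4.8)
The plan is to unwind the satisfaction clause for $\Box$ in an arbitrary $\N$-model based on a transitive $\N$-frame and then feed the transitivity condition the three worlds it requires. Fix a transitive $\N$-frame $\mathcal{F} = (W, \{\prec_B\}_{B \in \MF})$, an $\N$-model $(\mathcal{F}, \Vdash)$ based on it, and a point $x \in W$. Assuming $x \Vdash \Box A$, the goal is to show $x \Vdash \Box\Box A$, after which validity in $\mathcal{F}$ follows since $x$ and the model are arbitrary.

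By the satisfaction condition for $\Box$, establishing $x \Vdash \Box\Box A$ amounts to showing that for every $y \in W$ with $x \prec_{\Box A} y$ we have $y \Vdash \Box A$; and $y \Vdash \Box A$ in turn amounts to showing that for every $z \in W$ with $y \prec_A z$ we have $z \Vdash A$. So I would fix arbitrary $y, z \in W$ with $x \prec_{\Box A} y$ and $y \prec_A z$, and reduce the whole proposition to the single claim $z \Vdash A$.

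At this point $\MF$-transitivity applies verbatim: since $\Box\Box A \in \MF$, $x \prec_{\Box A} y$, and $y \prec_A z$, transitivity yields $x \prec_A z$. Combining this with the hypothesis $x \Vdash \Box A$ and the satisfaction clause applied at $x$ gives $z \Vdash A$, which completes the argument. The same bookkeeping underlies \cite[Proposition 3.9]{Kura24}.

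I do not expect a genuine obstacle here: the proof is a direct double application of the $\Box$-clause together with one use of the transitivity condition. The only point deserving a word of care is that the two nested occurrences of $\Box$ correspond to two \emph{different} accessibility relations, namely $\prec_{\Box A}$ for the outer box and $\prec_A$ for the inner one, and these are precisely the relations appearing in the hypothesis of $\MF$-transitivity; in particular no appeal to reflexivity, seriality, or any further frame property is made.
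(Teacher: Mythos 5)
Your proof is correct and is exactly the standard argument; the paper omits the proof and simply cites \cite[Proposition 3.9]{Kura24}, whose argument is the same double unwinding of the $\Box$-clause followed by one application of $\MF$-transitivity to the instance $\Box\Box A$. Your remark that the outer and inner boxes are governed by the distinct relations $\prec_{\Box A}$ and $\prec_A$, which are precisely those named in the transitivity condition, is the one point worth making explicit, and you made it.
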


\begin{thm}[{\cite[Theorems 3.12, 3.13, and 3.14]{Kura24}}]
Each of the logics $\mathsf{N4}$, $\mathsf{NR}$, and $\mathsf{NR4}$ is sound and complete with respect to the corresponding class of $\N$-frames. 
Moreover, each of them has the finite frame property. 
\end{thm}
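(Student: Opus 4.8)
The plan is to prove soundness, and completeness together with the finite frame property, separately, reducing each part to facts already available. Soundness is the easy half: $\N$ is sound with respect to the class of all $\N$-frames by \cite{FMT}, hence with respect to any subclass, so for each of the three logics it suffices to check the additional principle on the relevant frames. Proposition~\ref{propN1} shows that every instance of $\Box A \to \Box\Box A$ is valid in all transitive $\N$-frames --- this handles $\mathsf{N4}$ and $\mathsf{NR4}$ --- and Proposition~\ref{propN0} shows that the Rosser rule preserves validity over serial $\N$-frames --- this handles $\mathsf{NR}$ and $\mathsf{NR4}$. Putting these together gives soundness of each logic with respect to its corresponding class.

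For the converse I would construct a finite countermodel of the usual canonical-model-over-a-finite-closure kind. Fix $L \in \{\mathsf{N4},\mathsf{NR},\mathsf{NR4}\}$ and a modal formula $A_0$ with $L \nvdash A_0$. Let $\Sigma$ be the closure of $\{A_0\}$ under subformulas and single negation, which is finite; let $W$ be the set of maximal $L$-consistent subsets of $\Sigma$; and fix $x_0 \in W$ with $\neg A_0 \in x_0$. Put $x \Vdash p$ iff $p \in x$ for propositional variables $p$, and for each modal formula $C$ define
\[
x \prec_C y \ :\Longleftrightarrow\ \begin{cases} \Box C \in x \ \Rightarrow\ C \in y, & \text{if } \Box C \in \Sigma,\\[3pt] x = y, & \text{if } \Box C \notin \Sigma. \end{cases}
\]
The heart of the matter is the truth lemma, $x \Vdash \varphi \iff \varphi \in x$ for all $\varphi \in \Sigma$, which I would prove by induction on $\varphi$: the Boolean cases are routine, and for $\Box C \in \Sigma$ the forward direction is immediate from the first clause, while the backward direction uses \textsc{Nec} --- if $\Box C \notin x$ then $L \nvdash C$, since otherwise $L \vdash \Box C$ would contradict the consistency of $x$, so $\neg C$ sits in some $y \in W$ and $x \prec_C y$ holds vacuously. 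Applying this at $x_0$ gives $x_0 \not\Vdash A_0$ in a finite $\N$-model.

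The part requiring genuine care --- and, I expect, the main obstacle --- is checking that the frame just defined lies in the prescribed class. Here one must keep in mind that transitivity is meant in the strong sense of $\MF$-transitivity, so the closure condition has to hold for \emph{every} formula $A$, and seriality for \emph{every} index, not merely for those occurring in $\Sigma$; this is exactly why the relations $\prec_C$ with $\Box C \notin \Sigma$ were set to the identity, which makes both conditions survive for these ``irrelevant'' indices. Seriality of such relations is trivial, and for an index $C$ with $\Box C \in \Sigma$ it follows from the Rosser rule: from $\Box C \in x$ and $L \vdash \neg C$ one would derive $L \vdash \neg\Box C$, contradicting $\Box C \in x$, so $\{C\}$ extends to a $\prec_C$-successor of $x$. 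For transitivity, consider a chain $x \prec_{\Box A} y$, $y \prec_A z$: if $\Box\Box A \notin \Sigma$ then $\prec_{\Box A}$ is the identity, so $x = y$ and hence $x \prec_A z$; and if $\Box\Box A \in \Sigma$ then $\Box A, A \in \Sigma$, and --- assuming $\Box A \in x$, the other case being vacuous --- the axiom $\mathsf{4}$ together with maximal consistency promotes $\Box A \in x$ to $\Box\Box A \in x$, which forces $\Box A \in y$ via $x \prec_{\Box A} y$ and then $A \in z$ via $y \prec_A z$, so $x \prec_A z$. With the frame conditions in place, the stated completeness and finite frame property for $\mathsf{N4}$, $\mathsf{NR}$, and $\mathsf{NR4}$ follow; everything outside this bookkeeping is standard.
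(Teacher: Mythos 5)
Your proposal is correct. Note that the paper itself does not prove this statement---it is imported verbatim from \cite{Kura24}---so the right comparison is with the closely analogous argument the paper carries out in Section~\ref{MC} for $\NP$, $\ND$, $\NPF$, and $\NDF$, which follows the same template as the cited proofs. The soundness half is identical: restrict the soundness of $\N$ to the subclass and invoke Propositions~\ref{propN0} and~\ref{propN1}. For completeness, the interesting divergence is in how the relations $\prec_C$ with $\Box C$ outside the finite closure set are handled. The paper first defines $x \prec_B y$ as ``$\Box B \notin x$ or $B \in y$'' uniformly for \emph{all} $B$ (which makes $\prec_B$ the total relation whenever $\Box B$ lies outside the closure), observes that the resulting frame is only $\Sub^*(A)$-transitive (Claim~\ref{ffp-4}), and then performs a second reconstruction pass replacing the irrelevant relations by $=^W$. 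You instead build the identity in from the start, which yields full $\MF$-transitivity and seriality in one pass while leaving the truth lemma untouched, since that lemma only ever consults $\prec_C$ for $\Box C$ in the closure. This shortcut is legitimate here precisely because seriality is a per-index condition that the identity relation satisfies trivially; the paper's heavier machinery---the enlarged closure set $\num{\Sub(A)}$ with the $\neg^k$ bookkeeping and the careful case analysis in Claim~\ref{ffp-6}---is forced by the $\ND$-frame condition, which pairs $\prec_C$ with $\prec_{\neg C}$ and is \emph{not} preserved by naively resetting one of the two to the identity. So your argument is a genuinely leaner route for $\mathsf{N4}$, $\mathsf{NR}$, $\mathsf{NR4}$, but it would not transfer as stated to the $\ND$-type logics that are the paper's main concern.
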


The goal of this section is to prove the finite frame property of the four logics $\NP$, $\ND$, $\NPF$, and $\NDF$ with respect to corresponding classes of $\N$-frames. 

\begin{defn}
Let $\mathcal{F} = (W, \{ \prec_B\}_{B \in \MF})$ be an $\N$-frame.
\begin{itemize}
    \item 
    $\mathcal{F}$ is called an $\NP$-frame if
    for any $x \in W$, there exists $y \in W$ such that $x \prec_{\bot} y$.
    \item 
    We say that $\mathcal{F}$ is an $\ND$-frame if
    for any $A \in \MF$ and $x \in W$, there exists $y \in W$ such that $x \prec_{A} y$ and $x \prec_{\neg A} y$.
\end{itemize}
\end{defn}
We say that an $\N$-frame is an $\NPF$-frame if it is a transitive $\NP$-frame. 
Similarly, an $\N$-frame is called an $\NDF$-frame if it is a transitive $\ND$-frame.

\begin{prop}\label{propN2}
\leavevmode
\begin{enumerate}
    \item 
    $\neg \Box \bot$ is valid in all  $\NP$-frames.
\item 
    For any $A \in \MF$,
    $\neg (\Box A \wedge \Box \neg A)$ is valid in all $\ND$-frames.
\end{enumerate}
\end{prop}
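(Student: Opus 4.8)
The plan is to verify both claims directly from the satisfaction clause for $\Box$ in $\N$-models, arguing purely semantically. Recall that validity in a frame means validity in every $\N$-model based on that frame, so in each case I fix an arbitrary $\N$-model $(W,\{\prec_B\}_{B\in\MF},\Vdash)$ whose underlying frame has the relevant property, together with an arbitrary point $x\in W$, and show that the formula in question holds at $x$.

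For part (1), suppose the frame is an $\NP$-frame; the goal is $x\Vdash\neg\Box\bot$, i.e.\ that $x\Vdash\Box\bot$ fails. By the defining property of $\NP$-frames there is some $y\in W$ with $x\prec_{\bot}y$. Since $y\Vdash\bot$ never holds, the right-hand side of $x\Vdash\Box\bot\iff\forall z\,(x\prec_{\bot}z\Rightarrow z\Vdash\bot)$ fails for this $y$, whence $x\Vdash\Box\bot$ does not hold, as required.

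For part (2), suppose the frame is an $\ND$-frame, fix $A\in\MF$, and argue by contradiction: assume $x\Vdash\Box A\wedge\Box\neg A$. Applying the $\ND$-frame condition to $A$ and $x$, pick $y\in W$ with $x\prec_A y$ and $x\prec_{\neg A}y$ simultaneously. From $x\Vdash\Box A$ and $x\prec_A y$ we obtain $y\Vdash A$, and from $x\Vdash\Box\neg A$ and $x\prec_{\neg A}y$ we obtain $y\Vdash\neg A$; by the usual satisfaction conditions these cannot both hold. Hence $x\Vdash\neg(\Box A\wedge\Box\neg A)$.

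I do not expect any real obstacle here: the argument is a routine unwinding of the definitions, entirely parallel to the proof of Proposition~\ref{propN0} — indeed part (1) is morally the case $A\equiv\bot$ of that proposition, except that an $\NP$-frame is only assumed to be ``$\bot$-serial'' rather than fully serial, which is exactly the hypothesis actually used. The one point that needs a little care is that the witness $y$ must be extracted from the frame itself, before any reference to $\Vdash$, since validity quantifies over all valuations on the frame.
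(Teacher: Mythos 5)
Your proof is correct and follows essentially the same route as the paper's: in each part you extract the witness $y$ from the frame condition and use the satisfaction clause for $\Box$ to derive the contradiction (or directly refute $x \Vdash \Box\bot$). The only differences are cosmetic — part (1) is phrased directly rather than by contradiction, and your part (2) correctly states $y \Vdash A$ and $y \Vdash \neg A$ where the paper's text has a small typo writing $x$ in place of $y$.
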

\begin{proof}
1. Let $(W, \{ \prec_B\}_{B \in \MF}, \Vdash)$ be an $\N$-model based on an $\NP$-frame.
    Suppose, towards a contradiction, that $x \Vdash \Box \bot$ for some $x \in W$. Then, there exists $y \in W$ such that $x \prec_{\bot}y$, which implies $y \Vdash \bot$, a contradiction. Thus, we obtain $x \Vdash \neg \Box \bot$ for all $x \in W$.

\medskip

2. Let $(W, \{ \prec_B\}_{B \in \MF}, \Vdash)$ be an $\N$-model based on an $\ND$-frame.
   Suppose, towards a contradiction, that $x \Vdash \Box A \wedge \Box \neg A$ for some $x \in W$ and $A \in \MF$.
Then, there exists $y \in W$ such that $x \prec_{A} y$ and $x \prec_{\neg A} y$, which implies $y \Vdash A$ and $y \Vdash \neg A$, a contradiction.
Therefore, we obtain $x \Vdash \neg (\Box A \wedge \Box \neg A)$ for all $A \in \MF$ and $x \in W$.
\end{proof}

We shall prove the finite frame property of the logics $\ND$, $\NP$, $\NDF$, and $\NPF$.
For each $A \in \MF$,  if $A$ is of the form $\neg B$, then  let ${\sim} A$ be $B$; otherwise, let ${\sim} A$ be $\neg A$. For each $k \in \omega$, we define $\neg^k A$ inductively by $\neg^0 A : \equiv A$ and $\neg^{k+1} A : \equiv \neg \neg^k A$.
Let $\Sub(A)$ be the set of all subformulas of $A$ and $\Sub^* (A)$ be the set
\[
\Sub(A) \cup \{ \Box  B \in \MF \mid \Box \neg^kB \in \Sub(A) \text{ for some } k \geq 0 \}. 
\]
It is easily shown that if $\Box \neg^k B \in \Sub(A)$, then $\Box \neg^i B \in \Sub^\ast(A)$ for all $i \leq k$. 
So, if $\Box \neg B \in \Sub^\ast(A)$, then $\Box B \in \Sub^\ast(A)$. 

Let $\num{\Sub(A)}$ be the union of the following three sets: 
\begin{enumerate}
    \item $\Sub^*(A)$, 
    \item $\{ {\sim} {B} \mid B \in \Sub^*(A) \}$, 
    \item $\{ \Box \bot, \neg \Box \bot, \Box \top, \neg \Box \top, \bot, \top \}$.
\end{enumerate}
Notice that $\num{\Sub(A)}$ is a finite set of formulas. 
It is shown that the set $\num{\Sub(A)}$ is closed under taking subformulas. 

Let $L$ be one of $\ND$, $\NP, \NDF$, and $\NPF$.
We say that a finite set $X \subseteq \MF$ is $L$-consistent if $L \nvdash \bigwedge X \to \bot$, where $\bigwedge X$ denotes a conjunction of all elements of $X$. 
We say that $X$ is $A$-maximally $L$-consistent if $X \subseteq \num{\Sub(A)}$, $X$ is $L$-consistent, and for any $B \in \num{\Sub(A)}$, either $B \in X$ or ${\sim} B \in X$.
It is easily shown that for each $L$-consistent subset $X$ of $\num{\Sub(A)}$,
there exists an $A$-maximally $L$-consistent superset of $X$. 

We are ready to prove our modal completeness theorem. 

\begin{thm}\label{thm:MC}
Let $L \in \{ \NP, \ND, \NPF,  \NDF \}$. 
Then, for any $A \in \MF$, the following are equivalent: 
\begin{enumerate}
    \item $L \vdash A$. 
    \item $A$ is valid in all $L$-frames.
    \item $A$ is valid in all finite $L$-frames.
\end{enumerate}
\end{thm}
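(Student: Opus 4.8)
The plan is to establish the cycle $(1)\Rightarrow(2)\Rightarrow(3)\Rightarrow(1)$. The implication $(1)\Rightarrow(2)$ is soundness: each axiom and rule of $L$ preserves validity over the corresponding frame class. For the propositional part this is routine; for \textsc{Nec} one uses the standard argument that if $A$ is valid in an $\N$-model then so is $\Box A$; the axiom $\mathsf{4}$ is handled by Proposition~\ref{propN1} together with transitivity of $\NPF$- and $\NDF$-frames; and the axioms $\neg\Box\bot$ and $\neg(\Box A\wedge\Box\neg A)$ are exactly Proposition~\ref{propN2}. The implication $(3)\Rightarrow(1)$ is contraposition plus the finite model construction described below: if $L\nvdash A$, then $\{{\sim} A\}$ is $L$-consistent, hence extends to an $A$-maximally $L$-consistent set, and we will build a finite $L$-model refuting $A$.

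The core is the finite-model construction witnessing $(3)\Rightarrow(1)$, via a filtration-style canonical model over $\num{\Sub(A)}$. Take $W$ to be the set of all $A$-maximally $L$-consistent subsets of $\num{\Sub(A)}$, which is finite. The relations $\prec_B$ must be defined so as to (i) give the correct truth-lemma behaviour for each $\Box B$ occurring (in the $\Sub^\ast$-sense) in $A$, and (ii) meet the seriality clause defining $\NP$- and $\ND$-frames, and (iii) respect transitivity in the $\NPF$/$\NDF$ cases. For a formula $\Box B$ relevant to $A$, one sets $x\prec_B y$ roughly when $\Box B\in x$ forces $B\in y$ (the standard definition ensuring $x\Vdash\Box B\iff\Box B\in x$); for the seriality requirement on $\prec_\bot$ (resp.\ on every $\prec_C$) one must add, for each $x$, a successor $y$ with $y\Vdash\bot$ impossible — instead one arranges that whenever $\Box\bot\notin x$ (which holds for all $x$ when $L$ proves $\neg\Box\bot$, since then ${\sim}\Box\bot=\Box\bot$ cannot be in any consistent $x$) there is a genuine $\prec_\bot$-successor, and for $\ND$ that $\Box A\wedge\Box\neg A\notin x$ yields a common $\prec_A,\prec_{\neg A}$-successor $y$; the point of the enlargement $\num{\Sub(A)}$ (adding $\Box\bot,\Box\top$ and the ${\sim}$-closure and the $\Sub^\ast$-closure over $\neg^k$) is precisely to have enough formulas in the language of the finite model to pin down these successors and to close off the $\neg\neg$-discrepancies. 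For $B$ not relevant to $A$ one is free to choose $\prec_B$ to satisfy seriality (and transitivity) vacuously or by adding a reflexive point, so long as it does not disturb the truth lemma.

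With $W$ and $\{\prec_B\}$ in hand, prove the Truth Lemma: for every $B\in\num{\Sub(A)}$ and every $x\in W$, $x\Vdash B\iff B\in x$. This goes by induction on $B$; the only nontrivial case is $B\equiv\Box C$, where the $(\Leftarrow)$ direction uses the definition of $\prec_C$, and the $(\Rightarrow)$ direction uses $L$-maximality together with the \textsc{Nec} rule and (for the modal axioms) the derivability of $\neg\Box\bot$ or $\neg(\Box C\wedge\Box\neg C)$ inside $L$ to rule out the bad combinations. One must also check that the constructed frame genuinely lies in the class of $L$-frames — seriality from the construction above, transitivity (in the $4$-cases) by building $\prec_B$ as a suitable transitive closure restricted to relevant formulas while verifying this does not break the truth lemma. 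Finally, since $L\nvdash A$ gives ${\sim} A\in x_0$ for some $x_0\in W$, the Truth Lemma yields $x_0\nVdash A$, so $A$ fails in a finite $L$-model, completing $(3)\Rightarrow(1)$.

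The main obstacle I expect is the simultaneous satisfaction of the seriality condition and the Truth Lemma: adding successors to make $\prec_\bot$ (or each $\prec_C$) serial risks creating a state that verifies $\Box\bot$ (or $\Box C\wedge\Box\neg C$) at some $x$ where it should not, so one must show that $L$-consistency of $x$ — which uses the axioms $\neg\Box\bot$ / $\neg(\Box C\wedge\Box\neg C)$ — already guarantees the relevant negated box is in $x$, and hence that the mandatory successor can be chosen consistently. The secondary subtlety, flagged by the paper's remark about $\Box\neg\neg p\leftrightarrow\Box p$ not being a theorem, is to keep the $\neg^k$-layers genuinely distinct in the finite model; this is exactly why $\Sub^\ast(A)$ and the $\num{\;\cdot\;}$-closure are set up as they are, and the induction in the Truth Lemma must be run against that enlarged set rather than plain $\Sub(A)$.
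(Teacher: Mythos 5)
Your overall architecture matches the paper's: soundness via Propositions~\ref{propN1} and \ref{propN2}, and for $(3)\Rightarrow(1)$ a finite canonical model on the $A$-maximally $L$-consistent subsets of $\num{\Sub(A)}$ with $x\prec_B y$ iff ($\Box B\in x$ implies $B\in y$). For $L\in\{\NP,\ND\}$ your sketch is essentially the paper's proof: $L\vdash\neg\Box\bot$ forces $\Box\bot\notin x$, giving $x\prec_\bot x$, and for $\ND$ a short consistency argument produces a common $\prec_C,\prec_{\neg C}$-successor. So far, fine.

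The genuine gap is in the transitive cases $\NPF$ and $\NDF$. First, your proposed repair --- ``building $\prec_B$ as a suitable transitive closure restricted to relevant formulas'' --- is not the right operation: transitivity of an $\N$-frame is a condition linking \emph{two different} relations ($x\prec_{\Box C}y$ and $y\prec_C z$ imply $x\prec_C z$), not transitivity of a single binary relation, and closing off under it would add pairs $x\prec_C z$ with $\Box C\in x$ but $C\notin z$, destroying the truth lemma at $\Box C$. The paper goes the opposite way: it observes that the canonical frame is already $\Sub^*(A)$-transitive (because $\neg\Box\Box C\in x$ yields $\neg\Box C\in x$ by the axiom $\mathsf{4}$), and then \emph{collapses} every irrelevant $\prec_B$ to the identity relation $=^W$, which makes the transitivity condition for irrelevant $\Box\Box C$ trivial since $x\prec^*_{\Box C}y$ then forces $x=y$. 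Second, and this is the point your sketch only gestures at, the collapse threatens the $\ND$-frame condition, which quantifies over \emph{all} formulas $C$ (not just those relevant to $A$) and demands a single $y$ with $x\prec_C y$ \emph{and} $x\prec_{\neg C}y$. If $\prec^*_C$ were kept as $\prec_C$ while $\prec^*_{\neg C}$ were collapsed to $=^W$ (or vice versa), the required common successor could disappear. The paper's new idea is to define the set of ``kept'' relations so that $\prec^*_C$ and $\prec^*_{\neg C}$ are always simultaneously either both the old relation or both $=^W$: this is exactly the role of the extra clause ``$B\equiv\neg^k C$ for some $k>0$ with $\Box C\in\Sub(A)$'' in the definition of $\prec^*_B$, together with the closure of $\Sub^*(A)$ under stripping $\neg^k$. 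Your sketch correctly identifies that the $\neg^k$-machinery is there for a reason, but it does not supply this matching argument, and without it the step ``the constructed frame genuinely lies in the class of $L$-frames'' fails for $\NDF$.
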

\begin{proof}
The implications $(1 \Rightarrow 2)$ and $(2 \Rightarrow 3)$ are straightforward by Proposition \ref{propN2}.
We prove the contrapositive of the implication $(3 \Rightarrow 1)$.
Suppose $L \nvdash A$. 
Then the set $\{ {\sim}A \}$ is $L$-consistent, and there exists an $A$-maximally consistent set $x_A$ such that $\{ {\sim} A \} \subseteq x_A$.

We define the triple $(W, \{\prec_B\}_{B \in \MF}, \Vdash)$ as follows.
\begin{itemize}
    \item $W:= \{ x \subseteq \num{\Sub(A)} \mid x \text{ is an } A \text{-maximally } L\text{-consistent set} \}$.
    \item $x \prec_B y :\iff \Box B \notin x $ or $B \in y$.
    \item $x \Vdash p : \iff p \in x$.
\end{itemize}
Since $\num{\Sub(A)}$ is finite and $W$ contains $x_A$, $(W, \{ \prec_{B}\}_{B \in \MF})$ is a finite $\N$-frame.
The following claim is proved in the same way as in \cite[Claim 3.17]{Kura24}.

\begin{cl}\label{ffp-1}
For any $x \in W$  and $B \in \num{\Sub(A)}$,
\[
x \Vdash B \iff B \in x.
\]
\end{cl}
Since $A \notin x_A$, we obtain $x_A \nVdash A$ by Claim \ref{ffp-1}.
So, $A$ is not valid in the $\N$-model $(W, \{\prec_B\}_{B \in \MF}, \Vdash)$. 
The following claim completes the proof of the finite frame property for the case $L = \NP$.

\begin{cl}\label{ffp-2}
If $L \in \{ \NP, \NPF \}$, then $(W, \{\prec_B\}_{B \in \MF})$ is an $\NP$-frame.
\end{cl}
\begin{proof}
Let $x \in W$.
Since $L \vdash \neg \Box \bot$ and $x$ is $L$-consistent,  it follows that $\Box \bot \notin x$.
Therefore, we obtain $x \prec_{\bot} x$.
\end{proof}

Similarly, the following claim completes the proof for the case $L = \ND$.

\begin{cl}\label{ffp-3}
If $L \in \{ \ND, \NDF \}$, then $(W, \{\prec_B\}_{B \in \MF})$ is an $\ND$-frame.
\end{cl}
\begin{proof}
Let $C \in \MF$ and $x \in W$.
Since $\ND \vdash \Box C \wedge \Box \neg C \to \bot$, it follows from the $L$-consistency of $x$ that at least one of $\Box C$ and $\Box \neg C$ is not in $x$. 
We distinguish the following three cases.

\medskip

Case 1. $\Box C \notin x$ and $\Box \neg C \notin x$:
In this case, by the definition of $\{\prec_B \}_{B \in \MF}$, we obtain $x \prec_C x$ and $x \prec_{\neg C} x$.

\medskip

Case 2. $\Box C \notin x$ and $\Box \neg C \in x$:
Since $\Box \neg C \in \num{\Sub(A)}$, we obtain $\neg C \in \num{\Sub(A)}$.
Suppose, towards a contradiction, that the set $\{ \neg C \}$ is $L$-inconsistent.
Then, we obtain $L \vdash C$, which implies $L \vdash \Box C$. Since $L \vdash \Box C \wedge \Box \neg C \to \bot$, it follows that $L \vdash \Box \neg C \to \bot$.
This contradicts the $L$-consistency of $x$.
Therefore, the set $\{ \neg C \}$ is $L$-consistent, and so there exists $y \in W$ such that $\{ \neg C \} \subseteq y$. 
Thus, by the $L$-consistency of $y$, we obtain $C \notin y$. 
Since $\neg C \in \num{\Sub(A)}$, we have $\neg C \in y$, which implies $x \prec_{\neg C} y$.
Since $\Box C \notin x$, it follows that $x \prec_{C}y$.

\medskip 

Case 3. $\Box C \in x$ and $\Box \neg C \notin x$:
In this case, the existence of $z \in W$ such that $x \prec_C z$ and $x \prec_{\neg C} z$ is proved analogously as in Case 2.
\end{proof}

In the case where $L \in \{\NPF, \NDF\}$, the frame $(W, \{\prec_B\}_{B \in \MF})$ is not necessarily transitive in general, but the following holds.

\begin{cl}\label{ffp-4}
    If $L \in \{ \NDF, \NPF\}$, then $(W, \{\prec_B\}_{B \in \MF})$ is $\Sub^*(A)$-transitive.
\end{cl}
\begin{proof}
Let $\Box \Box C \in \Sub^*(A)$.
Suppose $x \prec_{\Box C} y$ and $y \prec_{C} z$.
We distinguish two cases according to whether $\Box \Box C \in x$ or not.
If $\Box \Box C \in x$, then it follows from $x \prec_{\Box C} y$ that $\Box C \in y$.
Since $y \prec_{C} z$, we obtain $C \in z$.
Therefore, $x \prec_{C} z$ holds.
If $\Box \Box C \notin x$, then we obtain $\neg \Box \Box C \in x$ because $\neg \Box \Box C \in \num{\Sub(A)}$.
Since $L \vdash \neg \Box \Box C \to \neg \Box C$, we obtain $\neg \Box C \in x$, that is, $\Box C \notin x$.
Thus, it follows that $x \prec_C z$.
\end{proof}

Therefore, when $L \in \{\NPF, \NDF\}$, we have to reconstruct the frame $(W, \{\prec_B\}_{B \in \MF})$ to define a new transitive $\N$-frame.

This reconstruction method has been developed in \cite{Kura24, KS}, and in particular, when $L = \NPF$, a transitive $\NP$-frame can be obtained by the existing method. 
However, in the case of $L = \NDF$, the existing method does not preserve the property of being an $\ND$-frame.
Therefore, we introduce a new construction as follows.
Since this construction also works for $L = \NPF$, from now on, we proceed with the proof for $L \in \{\NPF, \NDF\}$.

Let $=^W$ denote the equality $\{(w, w) \mid w \in W\}$ on $W$. 
We define the finite $\N$-model $(W, \{ \prec^*_B\}_{B \in \MF}, \Vdash^*)$ as follows.

\begin{itemize}
    \item $\prec^*_{B} := \begin{cases}
     \prec_B & \text{if } \Box B \in \Sub^\ast(A) \text{ or}\\
     & \quad B \equiv \neg^k C \text{ for some } k >0 \text{ and } \Box C \in \Sub(A), \\
     =^W & \text{otherwise.}
    \end{cases}$
    \item $x \Vdash^* p : \iff x \Vdash p$.
\end{itemize}

Since we have $\prec_C = \prec_C^\ast$ for every $\Box C \in \Sub(A)$, the following claim is easily verified. 

\begin{cl}\label{ffp'}
For any $B \in \Sub(A)$ and $x \in W$,
\[
x \Vdash B \iff x \Vdash^* B.
\]
\end{cl}

In particular, we have $x_A \nVdash^\ast A$ because $A \in \Sub(A)$. 
So, it suffices to prove that our new $\N$-frame $(W, \{ \prec^*_B\}_{B \in \MF})$ is an $L$-frame.

\begin{cl}\label{ffp-5}
If $L = \NPF$, then $(W, \{\prec^*_B\}_{B \in \MF})$ is an $\NP$-frame.
\end{cl}
\begin{proof}
By the definition of $\prec_\bot^\ast$, we have that $\prec_\bot^\ast$ is either $\prec_\bot$ or $=^W$. 
Claim \ref{ffp-2} guarantees that for every $x \in W$, there exists $y \in W$ such that $x \prec_\bot^\ast y$. 
\end{proof}

\begin{cl}\label{ffp-6}
 If $L = \NDF$, then $(W, \{\prec^*_B\}_{B \in \MF})$ is an $\ND$-frame. 
\end{cl}
\begin{proof}
Let $C \in \MF$ and $x \in W$. 
By Claim \ref{ffp-3}, there exists $y \in W$ such that $x \prec_{C} y$ and $x \prec_{\neg C} y$.
We distinguish the following three cases.

\paragraph{Case 1.} $\Box C \in \Sub^*(A)$: 
In this case, $\prec^*_{C} = \prec_{C}$. 
By the definition of $\Sub^\ast(A)$, we find some $k \geq 0$ such that $\Box \neg^k C \in \Sub(A)$. 
If $k = 0$, then $\Box C \in \Sub(A)$, and hence we have $\prec^*_{\neg C} = \prec_{\neg C}$. 
If $k > 0$, then $\Box \neg C \in \Sub^\ast(A)$, and thus we also have $\prec^*_{\neg C} = \prec_{\neg C}$. 
In either case, we get $x \prec^*_{C} y$ and $x \prec^*_{\neg C}y$.

\paragraph{Case 2.} $C \equiv \neg^k D$ for some $k > 0$ and $\Box D \in \Sub(A)$:
We have $\prec^*_{C} = \prec_{C}$.
    Since $\neg C  \equiv \neg^{k+1} D$, we also have $\prec^*_{\neg C} = \prec_{\neg C}$.

\paragraph{Case 3.} Otherwise: 
We have that $\prec_{ C}^\ast$ is $=^W$. 
We prove that $ \prec_{\neg C}^\ast$ is also $=^W$.
Since $\Box C \notin \Sub^\ast(A)$, we have $\Box \neg C \notin \Sub^\ast(A)$. 
Suppose, towards a contradiction, that $\neg C \equiv \neg^{k} D$ for some $k>0$ and $\Box D \in \Sub(A)$.
Then, $C \equiv \neg^{k-1} D$.
Since $\Box C \notin \Sub^*(A)$, we have $k-1 = 0$, and so $k=1$.
It follows that $C \equiv D$, and hence $\Box C \in \Sub(A)$, a contradiction.
Thus, we obtain that $ \prec_{\neg C}^\ast$ is $=^W$. 
We conclude that $x \prec_C^\ast x$ and $x \prec_{\neg C}^\ast x$. 
\end{proof}

Finally, we prove the following claim. 

\begin{cl}\label{ffp-7}
The $\N$-frame $(W, \{\prec^*_B\}_{B \in \MF})$ is transitive.
\end{cl}
\begin{proof}
Suppose $x \prec^*_{\Box C}y$ and $y \prec^*_{C} z$ for $C \in \MF$. 
We distinguish the following two cases: 

\medskip

1. $\Box \Box C \in \Sub^*(A)$:
We find some $k \geq 0$ such that $\Box \neg^k \Box C \in \Sub(A)$. 
So, $\Box C \in \Sub(A) \subseteq \Sub^*(A)$. 
It follows that $\prec^*_{\Box C} = \prec_{\Box C}$ and $\prec^*_{C} = \prec_{ C}$.
Then, we have $x \prec_{\Box C} y$ and $x \prec_{C} z$. 
By Claim \ref{ffp-4}, $x \prec_C z$, and hence $x \prec_C^\ast z$. 

\medskip
    
2. $\Box \Box C \notin \Sub^*(A)$:
Since $\Box C$ is not of the form $\neg D$, we have that $\prec^\ast_{\Box C}$ is $=^W$. 
Since $x \prec^*_{\Box C} y$, we get $x=y$. 
Thus, it follows from $y \prec^*_{C}z$ that $x \prec^*_{C}z$.
\end{proof}

We have finished our proof of the implication $(3 \Rightarrow 1)$.
\end{proof}



For a given formula $A$, the finite set $\num{\Sub(A)}$ is primitive recursively computable from $A$. 
If $L \nvdash A$, the proof constructs a countermodel whose worlds are subsets of $\num{\Sub(A)}$, and hence whose number of worlds is bounded by $2^{|\num{\Sub(A)}|}$.  
In the cases $L=\NPF$ and $L=\NDF$, the reconstruction replacing $\prec_B$ by $\prec_B^\ast$ leaves the set of worlds unchanged.
Thus the proof yields a primitive recursive decision procedure: given $A$, one searches through finite models whose sets of worlds have size at most $2^{|\num{\Sub(A)}|}$, and checks the truth of $A$ by bounded quantification over the finite data relevant to $A$.
If such a countermodel is found, then $L \nvdash A$; otherwise, by the theorem, $L \vdash A$. 
Therefore we obtain the following corollary.

\begin{cor}\label{decidability}
For each $L \in \{ \ND, \NP, \NDF, \NPF \}$, the $L$-provability problem is primitive recursively decidable. 
Moreover, if $L \nvdash A$, then a finite $L$-model falsifying $A$ can be constructed primitive recursively from $A$.
\end{cor}

\section{The arithmetical completeness of $\ND$ and $\NDF$}\label{AC1}

In this section, we prove the arithmetical completeness theorem for $\ND$ and $\NDF$.
Before proving the theorem, we introduce several notions, which are used throughout the rest of this paper. 
Let $\langle \xi_t \rangle_{t \in \omega}$ be the repetition-free primitive recursive enumeration of all $\LA$-formulas in ascending order of G\"odel numbers. 
We call an $\LA$-formula \textit{propositionally atomic} if it is either atomic or of the form $Q x \psi$, where $Q \in \{\forall, \exists \}$ and $\psi$ is an arbitrary $\LA$-formula.
For each propositionally atomic formula $\varphi$, we prepare a distinct propositional variable $p_{\varphi}$.
We define a primitive recursive injection $I$ from the set of all $\LA$-formulas into a set of propositional formulas as follows:
\begin{itemize}
\item 
$I(\varphi)$ is $p_{\varphi}$ for each propositionally atomic formula $\varphi$,
\item 
$I(\varphi \circ \psi)$ is $I(\varphi) \circ I(\psi)$ for $\circ \in \{ \wedge, \vee, \to \}$, 
\item 
$I(\neg \varphi)$ is $\neg I(\varphi)$. 
\end{itemize}
Let $X$ be a finite set of $\LA$-formulas.
An $\LA$-formula $\varphi$ is called a \textit{tautological consequence} (\textit{t.c.}) of $X$
if $\bigwedge_{\psi \in X}I(\psi) \to I(\varphi)$ is a tautology.  
Let $X \tc \varphi$ denote that $\varphi$ is a t.c.~of $X$.
For each $n \in \omega$, let $F_n$ be the set of all $\LA$-formulas whose G\"odel number is less than or equal to $n$. 
We may assume that $F_0 = \emptyset$. 
Let
\[
    P_{T,n} : = \{ \varphi \mid \mathbb{N} \models \exists y \leq \num{n} \ \Proof_T(\gn{\varphi}, y)\},
\]
where $\Proof_T(x, y)$ is a standard primitive recursive proof predicate of $T$ naturally expressing that ``$y$ is the G\"{o}del number of a proof of $x$ from $T$''.
If $P_{T,n} \tc \varphi$, then $\varphi$ is provable in $T$.
Notice that $P_{T, n} \subseteq F_n$. 
The facts about the above notions can be formalized and verified in $\PA$.

We are ready to prove our main theorem of this section. 
The proof below is based on a method used in previous proofs of the arithmetical completeness of several logics \cite{Kogure2,Kogure1,KK,Kura20b,Kura24}. 
In this method, two primitive recursive functions are defined simultaneously.
One is a function $h$, which is used to select a world in one of the finite
countermodels, and the other is a function $g$, from which the desired
provability predicate is obtained. 
Unlike the ordinary Solovay function for $\GL$, the function $h$ changes
its value at most once, and after such a change it remains constant. 
The behavior of $g$ is arranged according to the final value of $h$ so
that the resulting provability predicate coincides with the desired modal logic.
The present construction, however, differs essentially from the cases treated in \cite{KK,Kura20b} because the rule \textsc{RM} is not available in the present setting. 

\begin{thm}\label{thmND4}
Let $L \in \{\ND, \NDF\}$. 
There exists a $\Sigma_1$ provability predicate $\PR_T(x)$ of $T$ satisfying the following properties: 
\begin{enumerate}
    \item \textup{(Arithmetical soundness)} 
    For any $A \in \MF$ and any arithmetical interpretation $f$ based on $\PR_T(x)$, if $L \vdash A$, then $\PA \vdash f(A)$; 
    \item \textup{(Uniform arithmetical completeness)} 
    There exists an arithmetical interpretation $f$ based on $\PR_T(x)$ such that for any $A \in \MF$, $L \vdash A$ if and only if $T \vdash f(A)$.
\end{enumerate}
\end{thm}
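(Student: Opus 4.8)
The plan is to build the desired $\Sigma_1$ provability predicate $\PR_T(x)$ by a Solovay-style construction driven by a finite $L$-model, in the spirit of \cite{Kura24} but incorporating the new transitive-frame construction from Section \ref{MC} and, for $L = \NDF$, a Rosser-type twist in the definition of the embedding. For arithmetical completeness I first fix a modal formula $A_0$ with $L \nvdash A_0$ and apply the finite frame property to obtain a finite $L$-model $(W, \{\prec_B\}_{B \in \MF}, \Vdash)$ refuting $A_0$; by Claim \ref{ffp-7} I may assume this model is transitive when $L = \NDF$, and I may assume it is an $\ND$-frame (respectively $\NP$-frame for the $\NP$ case later). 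Following the Solovay template, I adjoin a new root $0$ to $W$, define by the Fixed-Point Theorem a primitive recursive function $h$ on $\omega$ (the ``Solovay function'') that walks through $W \cup \{0\}$ respecting the relations $\prec_B$, prove in $\PA$ that $h$ reaches a limit $\ell$, and set the arithmetical interpretation $f(p) := \bigvee\{\ell = w \mid w \Vdash p\}$, with $f(\Box B) := \PR_T(\gn{f(B)})$ as forced by the definition of arithmetical interpretation. The crucial point is that the provability predicate itself must be \emph{designed} so that the Solovay conditions hold, i.e.\ $\PR_T(\gn{\varphi})$ must be true at a node $w$ exactly when $\varphi$ holds at every $\prec$-successor; since here $\prec_B$ depends on the modal formula $B$, one needs, as in \cite{Kura24}, a family of provability-like predicates indexed by formulas, glued together via the injection $I$ and the tautological-consequence relation $\tc$ introduced just above, so that $\PR_T(\gn{\varphi})$ behaves like $\Box I(\varphi)$ in the model.

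For the construction of $\PR_T(x)$ itself (independently of any particular $A_0$), I would define a $\Sigma_1$ formula $\PR_T(x)$ that on a ``standard'' argument $\gn{\varphi}$ searches for a $T$-proof of $\varphi$, but whose behavior on non-standard or specially-coded arguments is perturbed so as to realize the semantic clause $x \Vdash \Box B \iff \forall y(x \prec_B y \Rightarrow y \Vdash B)$ once the Solovay function is in place. Concretely, I expect to use the standard provability $\Prov_T$ together with the sets $P_{T,n}$ and the sets $F_n$ of formulas with small Gödel number so that, inside $\PA$, one can reason about ``what has been proved by stage $n$'' and relate it to the value $h(n)$. The definition of $\PR_T(x)$ must be uniform (not depending on $A_0$) so that part (1), arithmetical soundness, holds for \emph{every} arithmetical interpretation: for this I verify that $\PR_T(x)$ is genuinely a provability predicate ($T \vdash \varphi \iff \PA \vdash \PR_T(\gn{\varphi})$), that it validates \textsc{Nec}, that it validates $\neg(\Box A \land \Box \neg A)$ (for $L = \ND, \NDF$) — using Proposition \ref{propN2}.2 translated into arithmetic via the way $\PR_T$ mirrors the $\ND$-frame seriality condition — and, for $L = \NDF$, that it validates $\Box A \to \Box\Box A$, which reflects transitivity and is where Arai's idea enters.

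The proof of part (2) then goes by the usual Solovay dichotomy. Working in $\PA$ one shows $h$ has a limit; then for a node $w \neq 0$ one proves $T + (\ell = w) \vdash f(B) \iff w \Vdash B$ by induction on $B$, the modal step using the designed behavior of $\PR_T$ at $w$ together with the provable $\Sigma_1$-completeness facts; and at the root one shows $T \nvdash \ell \ne 0$, so that the node $x_{A_0}$ refuting $A_0$ is ``$T$-possible'', whence $T \nvdash f(A_0)$. Conversely if $L \vdash A$ then $\PA \vdash f(A)$ by part (1), so $T \vdash f(A)$; this gives the biconditional. Since the whole construction of $\PR_T(x)$ depended only on $A_0$ through the choice of finite model, one obtains \emph{uniform} completeness by enumerating all modal formulas $A$ with $L \nvdash A$ and taking a disjoint sum of the corresponding finite models into a single Solovay construction (as in \cite{Kura24}), so one fixed interpretation $f$ simultaneously witnesses $L \nvdash A \Rightarrow T \nvdash f(A)$ for all $A$.

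The main obstacle, as the authors flag, is the $\NDF$ case: one must validate both $\neg(\Box A \land \Box \neg A)$ and $\Box A \to \Box\Box A$ with a single $\Sigma_1$ predicate, and Arai's \cite{Arai} construction of a Rosser predicate satisfying transitivity-type conditions, if used directly, would overshoot and make $\PR_T$ validate extra principles (e.g.\ $\Box\neg\neg p \leftrightarrow \Box p$), destroying completeness. The key new idea is therefore to \emph{restrict} the Arai-style modification to arguments that never arise as $f(B)$ for subformulas $B$ of the modal formulas under consideration — i.e.\ apply the Rosser trick only ``outside the image of the arithmetical interpretation'' — so that transitivity and the $\ND$-consistency schema are secured on the relevant formulas while no spurious equivalences are forced. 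Making this localization precise, and checking that the $\Sigma_1$-ness and the provability-predicate property survive it, is the technically delicate heart of the argument; the transitive-frame reconstruction from Claims \ref{ffp-4}--\ref{ffp-7} is what guarantees that the finite model feeding the Solovay construction has exactly the shape this localized Arai construction can handle.
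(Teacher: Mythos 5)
Your overall strategy does match the paper's: enumerate all $L$-unprovable formulas, take the disjoint union of the finite countermodels supplied by Section~\ref{MC}, build a Solovay-style function together with a Rosser-type $\Sigma_1$ predicate by the recursion theorem, and, for $\NDF$, apply Arai's normalization only to formulas outside the image of the arithmetical interpretation. Two of your descriptions are off in ways worth noting. The function here is not a Solovay walk ``through $W \cup \{0\}$ respecting the relations $\prec_B$'' with a limit $\ell$: $\N$-frames have no single accessibility relation to walk along and no converse well-foundedness, so (as in \cite{Kura24,Kogure1}) the function $h_0$ sits at $0$ and moves \emph{at most once}, to the least element of a set $J_s$ recording that $T$ has by stage $s$ proved statements incompatible with that element being the limit; moreover $\PA \vdash \neg\Con_T \leftrightarrow \exists x(\lambda_0(x)\wedge x\neq 0)$, so the move never actually happens. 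Likewise the predicate is not ``standard provability perturbed on special arguments'' glued from a family indexed by formulas via $I$ and $\tc$; it is a single witness-comparison predicate over an enumeration function $g_0$ that lists $T$-theorems until $h_0$ moves and then switches to outputs dictated by the target world.

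The genuine gap is in the $\NDF$ case. You correctly name the localization of Arai's construction to $x \notin \Img(f_0)$ as the key idea, but you supply no mechanism by which $\D{3}$ is then actually secured on those formulas, and that is where essentially all of the work lies. The paper needs (i) the operation $\star$ collapsing stacked negations, so that outside the image the predicate compares witnesses of $\varphi^\star$ and $(\neg\varphi)^\star$ rather than of $\varphi$ and $\neg\varphi$; (ii) the notion of a $\star$-iteration and the condition $\Phi(s)$, which gives $h_0$ a \emph{second, independent} trigger for becoming nonzero --- namely that $T$ has proved $\neg\PR^{\dagger}_{g_0}(\gn{\psi})$ for a non-image $\psi$ whose associated chain $(\sigma_0,\dots,\sigma_{r-1})$ has not yet been output --- whereupon Procedure~2 must emit the whole chain $(\neg\sigma_{r-1})^\star,\dots,(\neg\sigma_0)^\star$ in order, together with the auxiliary set $X$; and (iii) the delicate verification (Claim~\ref{ND4-4}, an induction on G\"odel numbers through $\star$-iterations) that these extra outputs never install a premature witness for $(\neg\varphi)^\star$, which is exactly what makes $\PA \vdash \PR^{\dagger}_{g_0}(\gn{\varphi}) \to \PR^{\dagger}_{g_0}(\gn{\PR^{\dagger}_{g_0}(\gn{\varphi})})$ survive under $\neg\Con_T$ without also forcing unwanted principles such as $\Box\neg\neg p \leftrightarrow \Box p$. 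Merely ``restricting Arai to the outside'' gives the schematic consistency by witness comparison but provides no reason for transitivity to hold on non-image sentences; without a device of the above kind that step fails.
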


\begin{proof}
Let $L \in \{ \ND, \NDF \}$.
From Corollary \ref{decidability}, we find a primitive recursive enumeration $\langle A_k  \rangle_{k \in \omega}$ of all $L$-unprovable modal formulas.
For each $k \in \omega$, a finite $L$-model $\bigl( W_k, \{ \prec_{k,B}\}_{B \in \MF}, \Vdash_k \bigr)$ which falsifies $A_k$ is primitive recursively constructed.
We may assume that the sets $\{ W_k \}_{k \in \omega}$ are pairwise disjoint subsets of $\omega$ and $\bigcup_{k \in \omega} W_k = \omega \setminus \{0\}$. 
We may assume that each model $\bigl( W_k, \{ \prec_{k,B}\}_{B \in \MF}, \Vdash_k   \bigr)$ is primitive recursively represented in $\PA$ and that basic properties of the model are proved in $\PA$.

For each $\LA$-formula $\varphi$, we define an $\LA$-formula $\varphi^\star$ as follows: 
Let $\psi$ be a formula which is not of the form $\neg \chi$ such that $\varphi$ is of the form $\neg^n \psi$ for some $n \geq 0$.
Let $\varphi^\star$ be $\psi$ if $n$ is even, and $\neg \psi$ otherwise.
Note that $\varphi^\star$ and $(\neg\neg \varphi)^\star$ are identical. 
Since the mapping $\star$ is primitive recursive, we may expand the language of $\PA$ by a function symbol for $\star$ and use the same symbol $\star$ for it in what follows.

By using the formalized double recursion theorem, we will define the primitive recursive functions $h_0$ and $g_0$.
Before defining these functions, we define formulas $\Prf_{g_0}(x,y)$, $\PRR_{g_0}(x)$, and $\PRD_{g_0}(x)$ by using the function $g_0$ as follows: 

\begin{itemize}
\item 
$\Prf_{g_0}(x,y) : \equiv (g_0(y) = x)$.
\item $\PRR_{g_0}(x) : \equiv \exists y (\Prf_{g_0}(x,y) \wedge \forall z \leq y  \neg \Prf_{g_0}(\dot{\neg}x,z) )$.
\item$\PRD_{g_0}(x) : \equiv \exists y (\Prf_{g_0}(x^\star,y) \wedge \forall z \leq y  \neg \Prf_{g_0}((\dot{\neg} x)^\star ,z) )$. 
\end{itemize}
Notice that the formulas $\PRR_{g_0}(x)$ and $\PRD_{g_0}(x)$ differ in how they handle negated formulas. 
\begin{itemize}
\item $\PRR_{g_0}(\gn{\neg \varphi}) \equiv \exists y (\Prf_{g_0}(\gn{\neg \varphi},y) \wedge \forall z \leq y  \neg \Prf_{g_0}(\gn{\neg \neg \varphi},z) )$.
\item$\PRD_{g_0}(\gn{\neg \varphi}) \equiv \exists y (\Prf_{g_0}(\gn{(\neg \varphi)^\star},y) \wedge \forall z \leq y  \neg \Prf_{g_0}(\gn{\varphi^\star} ,z) )$. 
\end{itemize}
Here, `R' and `A' stand for `Rosser' and `Arai', respectively. 
Originally, Arai \cite{Arai} used an operation which maps every formula to the negation normal form of it instead of our operation $\star$.

Let $\lambda_0(x)$ be the formula $\exists y (h_0(y)=x)$. 
Let $x \in \Img(f_0)$ be a $\Delta_1(\PA)$ formula saying that ``$x$ is of the form $f_0(B)$ for some $B \in \MF$''.
We define the arithmetical interpretation $f_0$ and the formula $\PR^{\dagger}_{g_0}(x)$ as follows:
\begin{itemize}
    \item $f_0(p) : \equiv \exists x \exists y( x \in W_y \wedge \lambda_0(x) \wedge x \neq 0 \wedge x \Vdash_y p)$, 
    \item $\PR^{\dagger}_{g_0}(x) : \equiv  (x \in \Img(f_0)  \land \PRR_{g_0}(x))  \lor (x \notin \Img(f_0) \land \PRD_{g_0}(x))$. 
\end{itemize}

Let us indicate the dependencies involved in the simultaneous definition of $h_0$ and $g_0$.
The function $h_0$ determines the formula $\lambda_0$, and hence the
arithmetical interpretation $f_0$. 
The function $g_0$ determines $\PRR_{g_0}$ and $\PRD_{g_0}$. 
Together $f_0$ and $g_0$ determine $\PR^{\dagger}_{g_0}$. 
In the construction below, the definition of $h_0$ will refer to these objects, and the definition of $g_0$ will in turn refer to $h_0$ and to formulas defined using $\PR^{\dagger}_{g_0}$. 
Thus the definitions of $h_0$ and $g_0$ are mutually dependent, and this is why
we use the formalized double recursion theorem.

We introduce the following notion, which generalizes the iteration of $\PR_{g_0}^{\dagger}(x)$ up to the operation $\star$.  
We say that a sequence $(\sigma_0, \ldots, \sigma_n)$ of $\LA$-formulas is a \emph{$\star$-iteration} if $\sigma_{i+1}^\star \equiv \PR^{\dagger}_{g_0}(\gn{\sigma_i})$ for all $i < n$.
For example, we define $\sigma_1$ and $\sigma_2$ as follows:  
\begin{itemize}
    \item $\sigma_1 : \equiv \neg \neg \neg \neg \PR_{g_0}^\dagger(\gn{\rho})$, 
    \item $\sigma_2 : \equiv \neg \neg \PR_{g_0}^\dagger(\gn{\sigma_1})$.  
\end{itemize}
Then, the sequence $(\rho, \sigma_1, \sigma_2)$ is a $\star$-iteration. 

The definitions of the functions $h_0$ and $g_0$ are based on the ideas of the construction of a Rosser provability predicate satisfying $\D{3}$ provided by Arai \cite{Arai} and the construction of a function $h'$ provided in the first author's paper \cite{Kogure1}. 
The function $h_0$ is defined in stages by referring to a condition $\Phi(s)$ and a set $J_s$. 
The condition $\Phi(s)$ states that there exist a formula $\psi$, a number $r \geq 1$, and a $\star$-iteration $(\sigma_0, \ldots, \sigma_{r-1})$ satisfying the following six conditions: 
\begin{enumerate}
    \item $\psi, \sigma_{r-1}^\star \in F_s$, 
    \item $\neg \PR^{\dagger}_{g_0}(\gn{\psi}) \in P_{T,s}$, 
    \item $\psi \notin \Img(f_0)$, 
    \item $\psi^\star \equiv (\neg \sigma_{r-1})^\star$, 
    \item $\sigma_0^\star$ is not of the form $\PR^{\dagger}_{g_0}(\gn{\chi})$, 
    \item $\sigma_j^\star \notin P_{T,s}$ for all $j \leq r - 1$. 
\end{enumerate}
In this setting, it follows from the definition of $f_0$ that $(\neg \sigma_j)^\star \notin \Img(f_0)$ for all $j \leq r-1$. 
Also, we may assume that if a formula $\delta$ contains $\num{n}$ as a sub-expression, then the G\"odel number of $\delta$ is larger than $n$. 
So, for each $i < r-1$, the G\"odel number of $\sigma_{i+1}$ is larger than that of $\sigma_i$ because $\sigma_{i+1}^\star \equiv \PR_{g_0}^{\dagger}(\gn{\sigma_i})$. 
In particular, the G\"odel number of $\psi$ is larger than that of $\sigma_i$ for all $i < r-1$. 
So, the first clause of the definition of the condition $\Phi$ guarantees the primitive recursiveness of the condition.

The set $J_s \subseteq \omega \setminus \{0\}$ is defined as follows: 

\begin{align*}
J_s : = \Big\{ j  \in \omega \setminus \{0\} \ \Big| \ 
 & P_{T,s} \tc \neg \lambda_0(\num{j}) \ \text{or}\\
& \exists k \in \omega \setminus \{0\}\  \exists B \in \Sub(A_k) \bigl[ j \in W_k\ \&\ \\
& \bigl(P_{T,s} \tc \forall x\, \alpha_B(x) \wedge (\alpha_B(\num{j}) \to \neg \lambda_0(\num{j})) \ \text{or}\ \\
& P_{T,s} \tc \forall x\, \beta_B(x) \wedge (\beta_B(\num{j}) \to \neg \lambda_0(\num{j})) \bigr) \bigr] \Big\},
\end{align*}
where $\alpha_B(x)$ and $\beta_B(x)$ are defined as follows:
\begin{itemize}
    \item
    $\alpha_B(x) : \equiv \left( x \neq 0 \wedge \exists y \left( x \in W_y \wedge x \Vdash_y B \right) \right) \to \neg \lambda_0(x)$, and
    \item $\beta_B(x) : \equiv \left( x \neq 0 \wedge \exists y \left( x \in W_y \wedge  x \nVdash_y B \right) \right) \to \neg \lambda_0(x)$.
\end{itemize}

We define the function $h_0$ step by step.
The value of $h_0$ starts at $0$, and $h_0(s)$ becomes non-zero only in either of the following cases: when the condition $\Phi(s)$ holds, or when $\Phi(s)$ fails but $J_s$ is nonempty. 
Once it becomes non-zero, it remains unchanged thereafter. 

\begin{itemize}
    \item $h_0(0) = 0$, 
    \item $h_0(s+1) = 
    \begin{cases}
        1 & \text{if } h_0(s) = 0 \ \&\ \Phi(s)\ \text{holds}, \\
        \min J_s & \text{if } h_0(s) = 0 \ \&\ \Phi(s)\ \text{fails to hold}\ \&\ J_s \neq \emptyset, \\
        h_0(s) & \text{otherwise.}
    \end{cases}$
\end{itemize}

Next, we define the function $g_0$ in stages. 
The construction of $g_0$ consists of Procedures 1 and 2. 
The construction starts with Procedure 1, in which $g_0$ outputs theorems of $T$ by referring to $T$-proofs based on the standard proof predicate $\Proof_T(x, y)$ of $T$. 
If, at some stage $s$, the value of $h_0$ changes from $0$ to a
non-zero value, that is, if $h_0(s)=0$ and $h_0(s+1)\neq 0$, then the construction switches to Procedure 2. 
In Procedure 2, the subsequent values of $g_0$ are defined according to whether this change of $h_0$ is caused by
the condition $\Phi(s)$ or by the non-emptiness of $J_s$.

\paragraph{Procedure 1.}

\textsc{Stage} $s$:
\begin{itemize}
\item If $h_0(s+1) =0$,
\begin{equation*}
  g_0(s)  = \begin{cases}
       \varphi & \text{if}\ s\ \text{is a}\ T \text{-proof of}\ \varphi, \\
               0 & \text{otherwise}.
             \end{cases}
  \end{equation*}

Then, go to \textsc{Stage} $s+1$.

\item If $h_0(s+1) \neq 0$, then go to Procedure 2.
\end{itemize}

\paragraph{Procedure 2.}

Let $s$ and $i \neq 0$ be such that $h_0(s) = 0$ and $h_0(s+1)= i$. 
We find $k \in \omega \setminus \{0\}$ such that $i \in W_k$. 
We define the number $u$ and the values of $g_0(s), \ldots, g_0(s + u-1)$ based on how $h_0(s+1)$ becomes non-zero by distinguishing the following two cases: 

\medskip

Case A. $h_0(s+1)$ becomes non-zero because $\Phi(s)$ holds: 
In this case, we find a formula $\psi$, a number $r \geq 1$ and a $\star$-iteration $(\sigma_0, \ldots, \sigma_{r-1})$ witnessing the condition $\Phi(s)$. 
Let $u : = r$, and for each $j \leq r-1$, let 
\[
  g_0(s+j) : = (\neg \sigma_{r-1-j})^\star.
\]
Notice that $g_0(s+j) \notin \Img(f_0)$ for every $j \leq r-1$. 

\medskip

Case B. $h_0(s+1)$ becomes non-zero because $J_s \neq \emptyset$: 
Let $u : = 0$, and we do nothing in this part of the construction. 

\medskip

Next, we define the values of $g_0(s + u + i)$ for $i \geq 0$. 
Let $X$ be the set of all $\LA$-formulas $(\neg \varphi)^\star$ satisfying the following three conditions: 
\begin{enumerate}
\item $\neg \PR^{\dagger}_{g_0}(\gn{\varphi}) \in P_{T,s-1}$,
\item $\varphi \notin \Img(f_0)$,
\item $\varphi^\star \equiv \neg \PR^{\dagger}_{g_0}(\gn{\chi})$ for some $\chi$.
\end{enumerate}
Let $\chi_0, \ldots, \chi_{l-1}$ be an effective enumeration of all elements of $X$. 
Notice that for each $i \leq l -1$, $\chi_i \notin \Img(f_0)$. 
For each $i \leq l-1$, let
\[
g_0(s+u+i) : = \chi_i.
\]
Recall that $\langle \xi_t \rangle$ is the repetition-free effective enumeration of all $\LA$-formulas in ascending order of G\"odel numbers. 
Then, for each $t$, define
\[
g_0 (s+u+l+t) : = \begin{cases}
    0 & \text{if } \xi_t \equiv f_{0}(B) \text{ and } i \nVdash_k \Box B \text{ for some } B \in \MF, \\
    \xi_t & \text{otherwise.}
\end{cases}
\]

This completes the construction of the function $g_0$.

It is not difficult to show that $h_0(i) \leq i$ for all $i \in \omega$, which guarantees that the function $h_0$ is primitive recursive (cf.~\cite[Claim 5.1]{Kogure1}).
The following proposition states some basic properties of $h_0$. 

\begin{cl}\label{Prop:h_0}
\leavevmode
\begin{enumerate}
\item 
$\PA \vdash \forall x \forall y \bigl( 0 < x < y \wedge \lambda_0(x) \to \neg \lambda_0(y) \bigr)$.
\item 
$\PA \vdash \neg \Con_{T} \leftrightarrow \exists x \bigl(\lambda_0(x) \wedge x \neq 0 \bigr)$.
\item 
For each $i \in \omega \setminus \{ 0\}$, $T \nvdash \neg \lambda_0(\num{i})$.
\item 
For each $l \in \omega$, $\PA \vdash \forall x \forall y \bigl( h_0(x) =0 \wedge h_0(x+1)=y \wedge y \neq 0 \to x \geq \num{l} \bigr)$.
\end{enumerate}
\end{cl}
\begin{proof}
1. Immediate from the definition of $h_0$.

\medskip

2. We argue in $\PA$. $(\leftarrow):$ Suppose $\exists x (\lambda_0(x) \wedge x \neq 0)$. 
Let $s$, $i \neq 0$, and $k$ be such that $h_0(s)=0$ and $h_0(s+1)= i \in W_k$.
We distinguish the following three cases: 

\medskip

Case 1. $\Phi(s)$ holds: 
We find a formula $\psi$, a number $r \geq 1$ and a $\star$-iteration $(\sigma_0, \ldots, \sigma_{r-1})$ witnessing the condition $\Phi(s)$. 
Then, $\sigma_{r-1}^\star \notin P_{T,s}$ and $\psi^\star \equiv (\neg \sigma_{r-1})^\star$. 
We have that $(\neg \psi)^\star \equiv \sigma_{r-1}^\star$ is not in $P_{T, s-1} = \{ g_0(0), \ldots, g_0(s-1) \}$. 
By the definition of Case A of Procedure 2 of the construction of $g_0$, we get $g_0(s) = (\neg \sigma_{r-1})^\star \equiv \psi^\star$. 
Thus, 
\[
    \exists y (\Prf_{g_0}(\gn{\psi^\star},y) \wedge \forall z \leq y  \neg \Prf_{g_0}(\gn{(\neg \psi)^\star} ,z) ), 
\]
that is, $\PRD_{g_0}(\gn{\psi})$ holds. 
Since $\psi \notin \Img(f_0)$, we have that $\PR^{\dagger}_{g_0}(\gn{\psi})$ holds.
Since $\PR^{\dagger}_{g_0}(\gn{\psi})$ is $\Sigma_1$, it is proved in $T$. 
Then, $T$ is inconsistent because $\neg \PR^{\dagger}_{g_0}(\gn{\psi}) \in P_{T,s}$. 

\medskip 

Case 2. $J_s \neq \emptyset$ because $P_{T, s} \tc \neg \lambda_0(\num{i})$: 
$\neg \lambda_0(\num{i})$ is provable in $T$.
Since $\lambda_0(\num{i})$ is a true $\Sigma_1$ sentence, $\lambda_0(\num{i})$ is provable in $T$ by formalized $\Sigma_1$-completeness (cf.~H\'ajek and Pudl\'ak \cite{HP}). 
It follows that $T$ is inconsistent.

\medskip

Case 3. $J_s \neq \emptyset$ because $P_{T, s} \tc \forall x \alpha_B(x) \land (\alpha_B(\num{i}) \to \neg \lambda_0(\num{i}))$ for some $B \in \Sub(A_k)$: 
In this case, $\forall x \alpha_B(x)$ and $\alpha_B(\num{i}) \to \neg \lambda_0(\num{i})$ are provable in $T$. Hence, $\alpha_B(\num{i})$ is provable in $T$,
and so is $\neg \lambda_0(\num{i})$. 
It follows that $T$ is inconsistent. 
The inconsistency of $T$ also follows from $P_{T, s} \tc \forall x \beta_B(x) \land (\beta_B(\num{i}) \to \neg \lambda_0(\num{i}))$ in the same way.  

\medskip

$(\to):$  If $T$ is inconsistent, then $\neg \lambda_0(\num{i})$ is $T$-provable for some $i \neq 0$. Let $s$ be a proof of $\neg \lambda_0(\num{i})$.
Then, $P_{T, s} \tc \neg \lambda_0(\num{i})$ holds and we obtain $J_s \neq \emptyset$. 
By a simple argument, it follows that $h_0(s+1) \neq 0$.

\medskip

3. Suppose that there exists an $i \in \omega \setminus \{0\}$ such that $T \vdash \neg \lambda_0(\num{i})$. Let $p$ be a proof of $\neg \lambda_0(\num{i})$ in $T$.
Then, $\neg \lambda_0(\num{i})$ is a t.c.~of $P_{T,p}$ and we obtain $\mathbb{N} \models \exists x \bigl( \lambda_0(x) \wedge x \neq 0 \bigr)$. By Clause 2, we obtain $\mathbb{N} \models \neg \Con_T$. 
This contradicts the consistency of $T$.

\medskip

4. Since $\mathbb{N} \models \Con_T$, for any $l \in \omega$, we obtain $\mathbb{N} \models h_0(\num{l}) =0$. Thus, $\PA \vdash h_0(\num{l})=0$,
and Clause 4 is easily obtained.
\end{proof}

\begin{cl}\label{ND4-1}
$\PA + \Con_T \vdash \forall x (\Prov_T(x) \leftrightarrow \PR_{g_0}^{\dagger}(x))$.
\end{cl}
\begin{proof}
By the construction of Procedure 1 and Claim \ref{Prop:h_0}.2, it follows that $\PA + \Con_T \vdash \Proof_T(x,y) \leftrightarrow \Prf_{g_0}(x,y)$.
Since $\PA + \Con_T$ proves
\begin{itemize}
    \item $\Proof_T(x,y) \to \forall z \leq y \neg \Proof_{T}(\dot{\neg}x,z)$ and 
    \item $\Proof_T(x^\star,y) \to \forall z \leq y \neg \Proof_{T}((\dot{\neg}x)^\star,z)$, 
\end{itemize}
$\PA + \Con_T$ proves $\Prov_T(x) \leftrightarrow \PRR_{g_0}(x)$ and $\Prov_T(x^\star) \leftrightarrow \PRD_{g_0}(x)$. 
Since $\PA \vdash \Prov_T(x) \leftrightarrow \Prov_T(x^\star)$, we get that $\Prov_T(x)$, $\PRR_{g_0}(x)$, and $\PRD_{g_0}(x)$ are equivalent over $\PA + \Con_T$. 
Then
\begin{itemize}
    \item $\PA + \Con_T \vdash x \in \Img(f_0) \to (\Prov_T(x) \leftrightarrow \PR^\dagger_{g_0}(x))$ and 
    \item $\PA + \Con_T \vdash x \notin \Img(f_0) \to (\Prov_T(x) \leftrightarrow \PR^\dagger_{g_0}(x))$. 
\end{itemize}
By the law of excluded middle, we conclude $\PA + \Con_T \vdash \Prov_T(x) \leftrightarrow \PR^\dagger_{g_0}(x)$. 
\end{proof}

From Claim \ref{ND4-1}, we have that our $\PR_{g_0}^\dagger(x)$ is a $\Sigma_1$ provability predicate of $T$. 

\begin{cl}\label{ND4-2}
Let $D \in \MF$.
\begin{enumerate}
\item 
$\PA \vdash \exists x \bigl( x \neq 0 \wedge \lambda_0(x) \wedge \exists y(x \in W_y \wedge  x \Vdash_y D)  \bigr) \to f_0(D)$, that is, $\PA \vdash \exists x \neg \alpha_D(x) \to f_0(D)$. 

\item 
$\PA \vdash \exists x \bigl( x \neq 0 \wedge \lambda_0(x) \wedge \exists y(x \in W_y \wedge  x \nVdash_y D)  \bigr) \to \neg f_0(D)$, that is, $\PA \vdash \exists x \neg \beta_D(x) \to \neg f_0(D)$.
\end{enumerate}
\end{cl}
\begin{proof}
We prove Clauses 1 and 2 simultaneously by induction on the construction of $D$. 
We prove only the case $D \equiv \Box C$.

\medskip

1. If $L \vdash \neg \Box C$, then $\PA \vdash \forall x \forall y ( x \in W_y \to x \nVdash_y \Box C)$ by the formalized soundness of $L$.
Thus, Clause 1 trivially holds.
So we may assume $L \nvdash \neg \Box C$. 
Then, there exists $j \in \omega$ such that $\neg \Box C \equiv A_j$. 
Let $(W_j, \{\prec_{j,B} \}_{B \in \MF}, \Vdash_j)$ be a countermodel of $A_j$ and let $l \in W_j$ be such that $l \nVdash_j \neg \Box C$. 
Since $(W_j, \{\prec_{j,B} \}_{B \in \MF})$ is an $\ND$-frame, we find $w \in W_j$ such that $l \prec_{j, C} w$ and $l \prec_{j,\neg C} w$. 
Since $l \Vdash_j \Box C$, we have $w \Vdash_j C$.
By the induction hypothesis, $\PA \vdash \exists x \neg \alpha_C(x) \to f_0(C)$.
We also have $\PA \vdash \alpha_C(\num{w}) \to \neg \lambda_0(\num{w})$ because $\PA \vdash \num{w} \neq 0 \land \exists y(\num{w} \in W_y \wedge \num{w} \Vdash_y C)$.
Let $p$ and $q$ be $T$-proofs of $\exists x \neg \alpha_C(x) \to f_0(C)$ and $\alpha_C(\num{w}) \to \neg \lambda_0(\num{w})$, respectively.

We work in $\PA$: Suppose that $\lambda_0(i)$, $i \in W_k$, and $i \Vdash_k \Box C$ hold. 
We prove that $\PR_{g_0}^{\dagger}(\gn{f_0(C)})$ holds. 
Let $s$ be such that $h_0(s)=0$ and $h_0(s+1)= i \neq 0$. 
Let $t$ be such that $\xi_t \equiv f_0(C)$. 
Let $u$ and $l$ be as in the definition of Procedure 2 of the construction of $g_0$. 
We obtain $g_0(s + u+ l + t)= \xi_t$ because $i \Vdash_k \Box C$.

Then, it suffices to show that $\neg f_0(C) \notin \{g_0(0), \ldots,  g_0(s+u+l+t) \}$.
Since $\neg f_0(C) \in \Img(f_0)$, we have $\neg f_0(C) \notin \{g_0(s), \ldots, g_0(s+u-1)\}$ because $g_0(s + j) \notin \Img(f_0)$ for every $j \leq u-1$.  
By the same reason, we have $\neg f_0(C) \notin X$, and hence $\neg f_0(C) \notin \{ g_0(s+u), \ldots, g_0(s+u+l-1)\}$.
Since the G\"{o}del number of $\neg f_0(C)$ is larger than that of $f_0(C)$, we have $\neg f_0(C) \notin \{ g_0(s+u+l), \ldots, g_0(s+u+l+t)\}$.

Thus, it suffices to show that $\neg f_0(C) \notin P_{T,s-1} = \{g_0(0), \ldots, g_0(s-1)\}$.
Suppose, toward a contradiction, that $\neg f_0(C) \in P_{T,s-1}$. 
By Claim \ref{Prop:h_0}.4, we have $s > p, q$, and so $\exists x \neg \alpha_C(x) \to f_0(C)$ and $\alpha_C(\num{w}) \to \neg \lambda_0(\num{w})$ are in $P_{T,s-1}$. 
It follows from $P_{T,s-1} \tc \forall x \alpha_C(x)$ that $w \in J_{s-1}$. 
Then, $h_0(s) \neq 0$, a contradiction. 
Therefore, we obtain $\neg f_0(C) \notin \{g_0(0), \ldots,  g_0(s+u+l+t) \}$. 
We conclude that $\PR^{\dagger}_{g_0}(\gn{f_0(C)})$ holds.

\medskip

2. If $L \vdash \Box C$, then $\PA \vdash \forall x \forall y (x \in W_y \to x \Vdash_y \Box C)$, and hence Clause 2 trivially holds.
So, we may assume $L \nvdash \Box C$. 
Then, there exist $j \in \omega \setminus \{0\}$ and a countermodel $(W_j, \{\prec_{j,B} \}_{B \in \MF}, \Vdash_j)$ of $\Box C$.
Then, there exists $r \in W_j$ such that $r \nVdash_j C$.
By the induction hypothesis, $\PA \vdash \exists x \neg \beta_C(x) \to \neg f_0(C)$.
Since $\PA \vdash \num{r} \neq 0 \land \exists y(\num{r} \in W_y \wedge \num{r} \nVdash_y C)$, we obtain $\PA \vdash \beta_C(\num{r}) \to \neg \lambda_0(\num{r})$.
Let $p$ and $q$ be $T$-proofs of $\exists x \neg \beta_C(x) \to \neg f_0(C)$ and $\beta_C(\num{r}) \to \neg \lambda_0(\num{r})$, respectively.

We work in $\PA$: Suppose that $\lambda_0(i)$, $i \in W_k$, and $i \nVdash_k \Box C$ hold. 
Let $s$ be such that $h_0(s)=0$ and $h_0(s+1)= i \neq 0$.
Let $u$ and $l$ be as in the construction of $g_0$. 
We show that $f_0(C)$ is not output by $g_0$.

Suppose, toward a contradiction, that $f_0(C) \in P_{T,s-1}$. 
By Claim \ref{Prop:h_0}.4, we have $s > p, q$, and so $\exists x \neg \beta_C(x) \to \neg f_0(C)$ and $\beta_C(\num{r}) \to \neg \lambda_0(\num{r})$ are in $P_{T,s-1}$. 
It follows from $P_{T,s-1} \tc \forall x \beta_C(x)$ that $r \in J_{s-1}$. 
This contradicts $h_0(s) = 0$. 
Therefore, we obtain $f_0(C) \notin \{g_0(0), \ldots,  g_0(s-1) \}$. 
Since $f_0(C) \in \Img(f_0)$, we have $f_0(C) \notin \{g_0(s), \ldots, g_0(s+u+l-1)\}$. 
Since $i \nVdash_k \Box C$, we obtain $g_0(s+u+l+t) \neq f_0(C)$ for all $t \geq 0$. 
Therefore, we have shown that $\forall y \neg \Prf_{g_0}(\gn{f_0(C)}, y)$ holds.
In particular, $\neg \PR^{\dagger}_{g_0}(\gn{f_0(C)})$ holds.
\end{proof}

We prove that the schematic consistency statement based on $\PR_{g_0}^{\dagger}(x)$ is provable in $\PA$. 

\begin{cl}\label{ND4-3}
For any $\LA$-formula $\varphi$,
$\PA \vdash \neg (\PR_{g_0}^{\dagger}(\gn{\varphi}) \wedge \PR_{g_0}^{\dagger}(\gn{\neg \varphi}))$.
\end{cl}
\begin{proof}
We distinguish the following two cases:

\medskip

Case 1. $\varphi \in \Img(f_0)$: 
Suppose that $\varphi \equiv f_0(B)$ for some $B \in \MF$. 
Since $\PA + \Con_T \vdash \Prov_T(\gn{\varphi}) \to \neg \Prov_T(\gn{\neg \varphi})$, 
by Claim \ref{ND4-1}, we obtain $\PA + \Con_T \vdash \PR^{\dagger}_{g_0}(\gn{\varphi}) \to \neg \PR^{\dagger}_{g_0}(\gn{\neg \varphi})$. 
So, by Claim \ref{Prop:h_0}.2, it suffices to prove $\PA + \exists x (x \neq 0 \land \lambda_0(x)) \vdash f_0(\Box B) \to \neg f_0(\Box \neg B)$. 

By Claim \ref{ND4-2}.2, 
\[
    \PA \vdash x \neq 0 \land \lambda_0(x) \land x \in W_y \land x \nVdash_y \Box B \to \neg f_0(\Box B). 
\]
Then, 
\[
    \PA \vdash x \neq 0 \land \lambda_0(x) \land x \in W_y \land f_0(\Box B) \to x \Vdash_y \Box B. 
\]
Since $\PA$ proves that every $W_k$ validates $\ND$, 
\[
    \PA \vdash x \neq 0 \land \lambda_0(x) \land x \in W_y \land f_0(\Box B) \to x \nVdash_y \Box \neg B. 
\]
By combining this with Claim \ref{ND4-2}.2, 
\[
    \PA \vdash x \neq 0 \land \lambda_0(x) \land x \in W_y \land f_0(\Box B) \to \neg f_0(\Box \neg B). 
\]
We conclude $\PA + \exists x(x \neq 0 \land \lambda_0(x)) \vdash f_0(\Box B) \to \neg f_0(\Box \neg B)$.

\medskip

Case 2. $\varphi \notin \Img(f_0)$:
It follows that
\begin{itemize}
    \item $\PA \vdash \PR^{\dagger}_{g_0}(\gn{\varphi}) \leftrightarrow \exists y (\Prf_{g_0}(\gn{ \varphi^\star},y) \wedge \forall z \leq y \neg \Prf_{g_0}(\gn{(\neg \varphi)^\star},z))$ and 
    \item $\PA \vdash \PR^{\dagger}_{g_0}(\gn{\neg \varphi}) \leftrightarrow \exists y (\Prf_{g_0}(\gn{(\neg \varphi)^\star},y) \wedge \forall z \leq y \neg \Prf_{g_0}(\gn{\varphi^\star},z))$.
\end{itemize}
Therefore, $\PA \vdash \PR^{\dagger}_{g_0}(\gn{\varphi}) \to \neg \PR^{\dagger}_{g_0}(\gn{\neg \varphi})$ follows from an easy witness comparison argument.
\end{proof}

\begin{cl}\label{ND4-4}
Let $\varphi$ be an $\LA$-formula. 
$\PA$ proves the following statement:
``If $h_0(s)=0$, $h_0(s+1) \neq 0$, $\varphi \notin \Img(f_0)$, and $\neg \PR^{\dagger}_{g_0} (\gn{\varphi}) \in P_{T,s-1}$, then the following hold: 
\begin{enumerate}
    \item[\textup{(i)}] 
    If $\varphi^\star$ is not of the form $\neg \PR^{\dagger}_{g_0}(\gn{\chi})$, then $(\neg \varphi)^\star \in P_{T,s-1}$.

    \item[\textup{(ii)}] 
    If there exist $r \geq 1$ and a $\star$-iteration $(\sigma_0, \ldots, \sigma_r)$ such that $\varphi \equiv \sigma_r$ and $\sigma_0^\star$ is not of the form $\PR^{\dagger}_{g_0}(\gn{\chi})$, then $(\neg \sigma_i)^\star \in P_{T,s-1}$ for all $i$ with $1 \leq i \leq r$.

    \item[\textup{(iii)}]
If $\psi$, $r \geq 1$, and $(\sigma_0, \ldots, \sigma_{r-1})$ witness the condition $\Phi(s)$, then $\varphi^\star \not \equiv (\neg \sigma_i)^\star$ for all $i < r$.

\item[\textup{(iv)}] 
$\varphi^\star \notin P_{T,s-1}$."
\end{enumerate}    
\end{cl}
\begin{proof}
We reason in $\PA$. 
Suppose $h_0(s)=0$, $h_0(s+1) \neq 0$, $\varphi \notin \Img(f_0)$, and $\neg \PR^{\dagger}_{g_0}(\gn{\varphi}) \in P_{T,s-1}$. 

\medskip

(i). Suppose, towards a contradiction, that $\varphi^\star$ is not of the form $\neg \PR^{\dagger}_{g_0}(\gn{\chi})$ and $(\neg \varphi)^\star \notin P_{T,s-1}$.
     Then, $\varphi$, $r = 1$, and the $\star$-iteration $(\neg \varphi)$ witness the condition $\Phi(s-1)$. 
This contradicts $h_0(s)=0$.

\medskip 

(ii). Suppose that there exist $r \geq 1$ and a $\star$-iteration $(\sigma_0, \ldots, \sigma_r)$ such that $\varphi \equiv \sigma_r$ and $\sigma_0^\star$ is not of the form $\PR^{\dagger}_{g_0}(\gn{\chi})$.
We prove $(\neg \sigma_{r-j})^\star \in P_{T,s-1}$ for all $j \leq r-1$ by induction on $j$.
We prove the base case $j=0$. 
Since $\varphi^\star \equiv \PR_{g_0}^{\dagger}(\gn{\sigma_{r-1}})$, it is not of the form $\neg \PR^{\dagger}_{g_0}(\gn{\chi})$. 
By (i), we obtain $(\neg \sigma_r)^\star \equiv (\neg \varphi)^\star\in P_{T,s-1}$.
We prove the induction step. 
Suppose $j+1 \leq r-1$ and $(\neg \sigma_{r-j})^\star \in P_{T,s-1}$. 
We get $\sigma_{r-(j+1)} \notin \Img(f_0)$ because $\varphi \notin \Img(f_0)$. 
Also, $\neg \PR^{\dagger}_{g_0}(\gn{\sigma_{r-(j+1)}}) \equiv (\neg \sigma_{r-j})^\star \in P_{T,s-1}$. 
Since $r-(j+1) \geq 1$, we have that $(\sigma_{r-(j+1)})^\star$ is not of the form $\neg \PR^{\dagger}_{g_0}(\gn{\chi})$. 
We may assume that we are in the position where we have already proved (i) for the $\LA$-formula $\sigma_{r-(j+1)}$. 
Therefore, $(\neg \sigma_{r-(j+1)})^\star \in P_{T,s-1}$.

\medskip

(iii). Suppose that $\psi$, $r \geq 1$, and $(\sigma_0, \ldots, \sigma_{r-1})$ witness the condition $\Phi(s)$. 
Assume, towards a contradiction, that there exists $i_0 < r$ such that $\varphi^\star \equiv (\neg \sigma_{i_0})^\star$.
By the condition $\Phi(s)$, for each $j \leq {i_0}$, we have $\sigma_j^\star \notin P_{T,s-1}$. 
Since $\neg \PR^{\dagger}_{g_0}(\gn{\varphi}) \in P_{T,s-1}$ and $\varphi \notin \Img(f_0)$, we have that $\varphi$, $i_0+1$, and $(\sigma_0, \ldots, \sigma_{i_0})$ witness the condition $\Phi(s-1)$. 
This contradicts $h_0(s)=0$.
Thus, $\varphi^\star \not \equiv (\neg \sigma_i)^\star$ for all $i < r$.

\medskip

(iv). We prove $\varphi^\star \notin P_{T,s-1}$ by induction on the G\"{o}del number of $\varphi$.
Suppose that for any $\psi$, if 
\begin{itemize}
    \item the G\"{o}del number of $\psi$ is smaller than that of $\varphi$,
    \item 
    $\psi \notin \Img(f_0)$, and 
    \item 
    $\neg \PR^{\dagger}_{g_0}(\gn{\psi}) \in P_{T,s-1}$,
\end{itemize}
then $\psi^\star \notin P_{T,s-1}$.
Suppose, towards a contradiction, that $\varphi^\star \in P_{T,s-1}$.
If $(\neg \varphi)^\star \in P_{T,s-1}$, then $P_{T,s-1}$ is inconsistent, and thus $P_{T,s-1} \tc \neg \lambda_0(\num{i})$ for all $i$. 
This contradicts $h_0(s)=0$. 
Hence, we get $(\neg \varphi)^\star \notin P_{T, s-1}$. 
By (i), $\varphi^\star$ is of the form $\neg \PR^{\dagger}_{g_0}(\gn{\eta})$ for some $\eta$. 
Let $r \geq 1$ be the largest number such that there exists a $\star$-iteration $(\sigma_0, \ldots, \sigma_r)$ such that $\varphi^\star \equiv (\neg \sigma_r)^\star$. 
Then, for such $\sigma_0$, we have that $\sigma_0^\star$ is not of the form $\PR^{\dagger}_{g_0}(\gn{\chi})$. 
In the case $r > 1$, since $\sigma_{r-1} \notin \Img(f_0)$ and $\neg \PR^{\dagger}_{g_0}(\gn{\sigma_{r-1}}) \equiv (\neg \sigma_r)^\star \equiv \varphi^\star \in P_{T, s-1}$, by applying (ii) to the $\LA$-formula $\sigma_{r-1}$ with the $\star$-iteration $(\sigma_0, \ldots, \sigma_{r-1})$, we have that $(\neg \sigma_{i})^\star \in P_{T,s-1}$ for all $i$ with $1 \leq i \leq r-1$. 
So, regardless of whether $r > 1$ or $r = 1$, we obtain that $(\neg \sigma_{i})^\star \in P_{T,s-1}$ for all $i$ with $1 \leq i \leq r$. 

Since $h_0(s) = 0$, we have that $P_{T, s-1}$ is not inconsistent, and hence $\sigma_j^\star \notin P_{T, s-1}$ for all $j$ with $1 \leq j \leq r$. 
We have $\sigma_0 \notin \Img(f_0)$ and $\neg \PR_{g_0}^{\dagger}(\gn{\sigma_0}) \equiv (\neg \sigma_1)^\star \in P_{T, s-1}$. 
Since the G\"{o}del number of $\sigma_0$ is smaller than that of $\varphi$, by the induction hypothesis, $\sigma_0^\star \notin P_{T,s-1}$.
So, we have shown that $\sigma_j^\star \notin P_{T, s-1}$ for all $j \leq r$. 
Therefore, it is shown that $\varphi$, $r+1$, and $(\sigma_0, \ldots, \sigma_r)$ witness the condition $\Phi(s-1)$. 
This contradicts $h_0(s)=0$.
We conclude $\varphi^\star \notin P_{T,s-1}$.
\end{proof}

\begin{cl}\label{ND4-5}
Suppose $L = \NDF$. 
Then, for any $\LA$-formula $\varphi$, $\PA \vdash \PR_{g_0}^{\dagger}(\gn{\varphi}) \to \PR^{\dagger}_{g_0}(\gn{\PR_{g_0}^{\dagger}(\gn{\varphi})})$.
\end{cl}
\begin{proof}
Since $\PR^{\dagger}_{g_0}(x)$ is $\Sigma_1$, we obtain $\PA \vdash \PR^{\dagger}_{g_0}(\gn{\varphi}) \to \Prov_T(\gn{\PR^{\dagger}_{g_0}(\gn{\varphi})})$.
By Claim \ref{ND4-1}, $\PA + \Con_T \vdash \PR^{\dagger}_{g_0}(\gn{\varphi}) \to \PR^{\dagger}_{g_0}(\gn{\PR^{\dagger}_{g_0}(\gn{\varphi})})$.
So, by Claim \ref{Prop:h_0}.2, it suffices to prove $\PA + \exists x (x \neq 0 \land \lambda_0(x)) \vdash \PR^{\dagger}_{g_0}(\gn{\varphi}) \to \PR^{\dagger}_{g_0}(\gn{\PR^{\dagger}_{g_0}(\gn{\varphi})})$.
We distinguish the following two cases: 

\medskip

Case 1. $\varphi \in \Img(f_0)$: 
Let $\varphi \equiv f_0(B)$ for some $B \in \MF$.
By Claim \ref{ND4-2}.2,  
\[
    \PA \vdash x \neq 0 \land \lambda_0(x) \land x \in W_y \land x \nVdash_y \Box B \to \neg f_0(\Box B), 
\]
and so
\[
    \PA \vdash x \neq 0 \land \lambda_0(x) \land x \in W_y \land f_0(\Box B) \to x \Vdash_y \Box B.  
\]
Since $\PA$ proves that every $W_k$ validates $\NDF$, 
\[
    \PA \vdash x \neq 0 \land \lambda_0(x) \land x \in W_y \land f_0(\Box B) \to x \Vdash_y \Box \Box B.  
\]
By Claim \ref{ND4-2}.2,  
\[
    \PA \vdash x \neq 0 \land \lambda_0(x) \land x \in W_y \land x \Vdash_y \Box \Box B \to f_0(\Box \Box B). 
\]
By combining them, 
\[
    \PA \vdash x \neq 0 \land \lambda_0(x) \land x \in W_y \land f_0(\Box B) \to f_0(\Box \Box B).  
\]
We conclude $\PA + \exists x (x \neq 0 \land \lambda_0(x)) \vdash f_0(\Box B) \to f_0(\Box \Box B)$. 

\medskip

Case 2. $\varphi \notin \Img(f_0)$:         
We work in $\PA + \exists x (\lambda_0(x) \wedge x \neq 0)$:
Let $s$, $i \neq 0$, and $k$ be such that $h_0(s)=0$ and $h_0(s+1)=i \in W_k$.
Suppose that $\neg \PR^{\dagger}_{g_0}(\gn{\PR^{\dagger}_{g_0}(\gn{\varphi})})$ holds.
Since $\PR^{\dagger}_{g_0}(\gn{\varphi}) \notin \Img(f_0)$, this means that $\neg \PRD_{g_0}(\gn{\PR^{\dagger}_{g_0}(\gn{\varphi})})$ holds.
We prove that $\neg \PRD_{g_0}(\gn{\varphi})$ holds.
Let $u$ and $l$ be as in Procedure 2 of the construction of $g_0$. 
Let $t_0$ and $t_1$ be such that $\xi_{t_0} \equiv \PR^{\dagger}_{g_0}(\gn{\varphi})$ and $\xi_{t_1} \equiv \neg \PR^{\dagger}_{g_0}(\gn{\varphi})$. 
Then, $t_0 < t_1$. 
Since $\PR^{\dagger}_{g_0}(\gn{\varphi}), \neg \PR^{\dagger}_{g_0}(\gn{\varphi}) \notin \Img(f_0)$, we have that $g_0(s+u+l+t_0) = \PR^{\dagger}_{g_0}(\gn{\varphi})$ and $g_0(s+u+l+t_1) = \neg \PR^{\dagger}_{g_0}(\gn{\varphi})$. 
Since $\neg \PR^{\dagger}_{g_0}(\gn{\PR^{\dagger}_{g_0}(\gn{\varphi})})$ holds, it follows that this is not the first output of $\neg \PR^{\dagger}_{g_0}(\gn{\varphi})$. 
Since $\neg \PR^{\dagger}_{g_0}(\gn{\varphi})$ is not of the form $\PR^{\dagger}_{g_0}(\gn{\chi})$, it follows that $\neg \PR^{\dagger}_{g_0}(\gn{\varphi}) \notin X$.
Therefore, we have $\neg \PR^{\dagger}_{g_0}(\gn{\varphi}) \in \{g_0(0), \ldots, g_0(s + u-1)\}$. 
We distinguish the following two cases: 

\medskip 

Case 2.1. $\neg \PR^{\dagger}_{g_0}(\gn{\varphi}) \in \{g_0(0), \ldots, g_0(s-1)\}$: In this case, $\neg \PR^{\dagger}_{g_0}(\gn{\varphi}) \in P_{T,s-1}$. 
By (iv) of Claim \ref{ND4-4}, we have $\varphi^\star \notin P_{T,s-1}$. 
If $\varphi^\star$ is not of the form $\neg \PR^{\dagger}_{g_0}(\gn{\chi})$, then by (i) of Claim \ref{ND4-4}, we have $(\neg \varphi)^\star \in P_{T,s-1}$.
It follows that $\neg \PRD_{g_0}(\gn{\varphi})$ holds.
If $\varphi^\star \equiv \neg \PR^{\dagger}_{g_0}(\gn{\chi})$ for some $\chi$, then  $(\neg \varphi)^\star \in X$ and $\varphi^\star \notin X$.
By (iii) of Claim \ref{ND4-4}, even if the condition $\Phi(s)$ holds, we have $\varphi^\star \notin \{g_0(0), \ldots, g_0(s+u+l-1)\}$. 
Therefore, we get that $\neg \PRD_{g_0}(\gn{\varphi})$ holds.

\medskip 

Case 2.2. $\neg \PR^{\dagger}_{g_0}(\gn{\varphi}) \in \{g_0(s), \ldots, g_0(s+u-1)\}$: 
This happens when the condition $\Phi(s)$ holds. 
Let $\psi$, $r \geq 1$, and $(\sigma_0, \ldots, \sigma_{r-1})$ witness the condition $\Phi(s)$. 
In this case, we find $j \leq r-1$ such that $g_0(s+j) = \neg \PR^{\dagger}_{g_0}(\gn{\varphi}) \equiv (\neg \sigma_{r-1-j})^\star$. 
Since $\sigma_0^\star \not \equiv  \PR^{\dagger}_{g_0}(\gn{\varphi})$, we have $r - 1 - j \geq 1$. 
Then, $\PR_{g_0}^{\dagger}(\gn{\varphi}) \equiv \sigma_{r - 1 -j}^\star \equiv \PR_{g_0}^{\dagger}(\gn{\sigma_{r- 1 - (j+1)}})$, and hence $\varphi \equiv \sigma_{r - 1 - (j+1)}$. 
We get
\[
    g_0(s+j+1)= (\neg \sigma_{r-1-(j+1)})^\star \equiv (\neg \varphi)^\star.
\]
Since the G\"{o}del number of $(\neg \sigma_{r-1-l})^\star$ for $l \leq j$ is larger than that of $\varphi^\star$, we have $\varphi^\star \notin \{ g_0(s), \ldots, g_0(s+j) \}$.
Also by the condition $\Phi(s)$, we obtain $\varphi^\star \equiv (\sigma_{r-1-(j+1)})^\star \notin P_{T,s-1} = \{g_0(0), \ldots, g_0(s-1)\}$.
We conclude that $\neg \PRD_{g_0}(\gn{\varphi})$ holds.
\end{proof}

We finish our proof of Theorem \ref{thmND4}. 
Clause 1 and the implication $(\Rightarrow)$ of Clause 2 trivially hold by Claim \ref{ND4-3} and Claim \ref{ND4-5}.  

We prove the implication $(\Leftarrow)$ of Clause 2. Suppose $L \nvdash A$. 
Then, there exists $k \in \omega$ such that $A \equiv A_k$.
Let $\bigl( W_k, \{\prec_{k,B}\}_{B \in \MF}, \Vdash_k  \bigr)$ be a countermodel of $A_k$, and let $i \in W_k$ be $i \nVdash_k A_k$.
Hence, we obtain 
\[
\PA \vdash \num{i} \neq 0 \wedge \exists y \bigl( \num{i} \in W_y \wedge \num{i} \nVdash_y A \bigr)
\]
and it follows that $\PA \vdash \beta_A (\num{i}) \to \neg \lambda_0(\num{i})$.
By the contrapositive of Claim \ref{ND4-2}.2, we get
$\PA \vdash f_0(A)  \to \forall x \beta_A (x)$, which implies $\PA \vdash f_0(A)  \to  \beta_A (\num{i})$.
Then, we obtain $\PA \vdash f_0(A) \to \neg \lambda_0(\num{i})$.
It follows from Claim \ref{Prop:h_0}.3 that $T \nvdash f_0(A)$.
\end{proof}

\begin{cor}[The arithmetical completeness of $\ND$]\label{AC_ND}
\begin{align*}
    \ND & = \bigcap \{ \PL(\PR_T) \mid \PR_T(x) \text{ is a provability predicate satisfying }  T \vdash \ConS_T \},\\
    & = \bigcap \{ \PL(\PR_T) \mid \PR_T(x) \text{ is a } \Sigma_1 \text{ provability predicate satisfying }  T \vdash \ConS_T \}.
\end{align*}
Moreover, there exists a $\Sigma_1$ provability predicate $\PR_T(x)$ of $T$ such that $\ND = \PL(\PR_T)$.
\end{cor}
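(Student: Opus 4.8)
The plan is to read off Corollary~\ref{AC_ND} directly from Theorem~\ref{thmND4} and Claim~\ref{ND4-3}; essentially no new work is required beyond some bookkeeping relating the two intersections. I would split the argument into a soundness inclusion and a completeness inclusion.

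For soundness I would show $\ND \subseteq \PL(\PR_T)$ for every provability predicate $\PR_T(x)$ of $T$ with $T \vdash \ConS_T$. Fixing an arbitrary arithmetical interpretation $f$ based on $\PR_T(x)$, one argues by induction on an $\ND$-derivation of a modal formula $A$ that $T \vdash f(A)$. The propositional axioms and rules are immediate since $f$ commutes with the Boolean connectives. For the rule \textsc{Nec}: if $T \vdash f(A)$, then $\PA \vdash \PR_T(\gn{f(A)})$ by the defining property of a provability predicate, i.e.\ $\PA \vdash f(\Box A)$, hence $T \vdash f(\Box A)$. For an instance $\neg(\Box A \land \Box \neg A)$ of the axiom of $\ND$, the sentence $f(\neg(\Box A \land \Box \neg A))$ is $\neg(\PR_T(\gn{f(A)}) \land \PR_T(\gn{\neg f(A)}))$, which is the instance of $\ConS_T$ determined by the sentence $f(A)$, and so is provable in $T$. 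This gives $\ND \subseteq \bigcap\{\PL(\PR_T) \mid \ldots\}$ for the family of all such provability predicates, and hence also for the smaller family of the $\Sigma_1$ ones.

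For completeness I would apply Theorem~\ref{thmND4} with $L = \ND$ to obtain a $\Sigma_1$ provability predicate $\PR_T(x)$ together with an arithmetical interpretation $f$ based on it satisfying $\ND \vdash A \iff T \vdash f(A)$ for all $A \in \MF$. By Claim~\ref{ND4-3}, $\PA \vdash \neg(\PR_T(\gn{\varphi}) \land \PR_T(\gn{\neg\varphi}))$ for every $\LA$-formula $\varphi$, i.e.\ $\PA \vdash \ConS_T$, hence $T \vdash \ConS_T$; so this $\PR_T(x)$ belongs to both families. If $A \in \PL(\PR_T)$, then in particular $T \vdash f(A)$ for the distinguished $f$, so $\ND \vdash A$ by the theorem; combined with the soundness inclusion this yields $\ND = \PL(\PR_T)$, which is the ``Moreover'' clause. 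Finally, since this $\PR_T(x)$ lies in the $\Sigma_1$ family, the $\Sigma_1$-restricted intersection is contained in $\PL(\PR_T) = \ND$; together with the chain $\ND \subseteq \bigcap\{\PL(\PR_T)\mid \PR_T \text{ a prov.\ pred.}, T\vdash\ConS_T\} \subseteq \bigcap\{\PL(\PR_T)\mid \PR_T \text{ a }\Sigma_1\text{ prov.\ pred.}, T\vdash\ConS_T\}$ coming from soundness (the second inclusion because the $\Sigma_1$ family is smaller), all three sets coincide with $\ND$. I do not expect any substantive obstacle here: the entire technical content has already been discharged in Theorem~\ref{thmND4} and Claim~\ref{ND4-3}, and the only point needing a little care is this squeezing of the $\Sigma_1$-restricted intersection between $\ND$ and $\PL(\PR_T)$ for the witnessing predicate.
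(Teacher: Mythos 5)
Your proposal is correct and is exactly the intended derivation: the paper leaves the corollary's proof implicit, and it follows from Theorem~\ref{thmND4} together with Claim~\ref{ND4-3} by precisely the soundness induction and the squeezing argument you describe. No gaps.
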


\begin{cor}[The arithmetical completeness of $\NDF$]\label{AC_ND4}
\begin{align*}
    \NDF & = \bigcap \{ \PL(\PR_T) \mid \PR_T(x) \text{ satisfies } \D{3} \text{ and } T \vdash \ConS_T \},\\
    & = \bigcap \{ \PL(\PR_T) \mid \PR_T(x) \text{ is } \Sigma_1 \text{ and satisfies } \D{3} \text{ and } T \vdash \ConS_T \}.
\end{align*}
Moreover, there exists a $\Sigma_1$ provability predicate $\PR_T(x)$ of $T$ such that $\NDF = \PL(\PR_T)$.
\end{cor}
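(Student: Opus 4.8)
The plan is to deduce Corollary~\ref{AC_ND4} directly from Theorem~\ref{thmND4} (applied with $L = \NDF$), supplemented by an elementary general arithmetical soundness observation for $\NDF$. Fix the $\Sigma_1$ provability predicate $\PR_{g_0}^{\dagger}(x)$ constructed in the proof of Theorem~\ref{thmND4} for the case $L = \NDF$. By Claim~\ref{ND4-1} it is indeed a $\Sigma_1$ provability predicate of $T$; by Claim~\ref{ND4-5} it satisfies $\PA \vdash \PR_{g_0}^{\dagger}(\gn{\varphi}) \to \PR_{g_0}^{\dagger}(\gn{\PR_{g_0}^{\dagger}(\gn{\varphi})})$ for every $\LA$-formula $\varphi$, hence in particular the condition $\D{3}$; and by Claim~\ref{ND4-3} it satisfies $\PA \vdash \neg(\PR_{g_0}^{\dagger}(\gn{\varphi}) \wedge \PR_{g_0}^{\dagger}(\gn{\neg \varphi}))$ for every $\varphi$, hence $T \vdash \ConS_T$. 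Thus $\PR_{g_0}^{\dagger}(x)$ lies in the class of $\Sigma_1$ provability predicates satisfying $\D{3}$ and $T \vdash \ConS_T$, and a fortiori in the larger class appearing in the first displayed line. The uniform arithmetical completeness part of Theorem~\ref{thmND4} gives an arithmetical interpretation $f$ based on $\PR_{g_0}^{\dagger}(x)$ with $\NDF \vdash A \iff T \vdash f(A)$ for all $A \in \MF$; combining the direction $(\Leftarrow)$ with the arithmetical soundness part yields $\PL(\PR_{g_0}^{\dagger}) = \NDF$, which gives the last assertion of Corollary~\ref{AC_ND4}.

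For the two displayed equalities I would prove the inclusions separately. Let $\mathcal{C}$ be the class of provability predicates of $T$ satisfying $\D{3}$ and $T \vdash \ConS_T$, and let $\mathcal{C}_{\Sigma_1} \subseteq \mathcal{C}$ be the subclass of $\Sigma_1$ ones; then trivially $\bigcap_{\PR_T \in \mathcal{C}} \PL(\PR_T) \subseteq \bigcap_{\PR_T \in \mathcal{C}_{\Sigma_1}} \PL(\PR_T)$. To show $\NDF \subseteq \bigcap_{\PR_T \in \mathcal{C}} \PL(\PR_T)$, I would verify by induction on a derivation in $\NDF$ that $\NDF \vdash A$ implies $T \vdash f(A)$ for every $\PR_T(x) \in \mathcal{C}$ and every arithmetical interpretation $f$ based on it. The rule \textsc{Nec} is validated because $\PR_T(x)$ is a provability predicate, so $T \vdash f(A)$ gives $\PA \vdash \PR_T(\gn{f(A)})$, i.e.\ $T \vdash f(\Box A)$; propositional axioms and modus ponens are immediate; the axiom $\Box A \to \Box \Box A$ is validated because $f(\Box A \to \Box \Box A)$ is precisely the instance of $\D{3}$ for the sentence $f(A)$; and the axiom $\neg(\Box A \wedge \Box \neg A)$ is validated because, using $f(\neg A) \equiv \neg f(A)$, its $f$-image $\neg(\PR_T(\gn{f(A)}) \wedge \PR_T(\gn{\neg f(A)}))$ is the instance of $\ConS_T$ for the sentence $f(A)$, which $T$ proves by assumption. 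Finally, since $\PR_{g_0}^{\dagger}(x) \in \mathcal{C}_{\Sigma_1}$ and $\PL(\PR_{g_0}^{\dagger}) = \NDF$, we have $\bigcap_{\PR_T \in \mathcal{C}_{\Sigma_1}} \PL(\PR_T) \subseteq \PL(\PR_{g_0}^{\dagger}) = \NDF$, so the chain of inclusions closes and all three sets coincide with $\NDF$.

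Corollary~\ref{AC_ND} is obtained in exactly the same way after dropping $\D{3}$ and the axiom $\Box A \to \Box \Box A$ everywhere and using the $L = \ND$ instance of Theorem~\ref{thmND4}. Since all the genuine work has already been carried out in the proof of Theorem~\ref{thmND4}, I do not expect a real obstacle here; the only points that deserve a second glance are the passage from the $\PA$-provable strengthenings in Claims~\ref{ND4-5} and~\ref{ND4-3} to the external conditions $\D{3}$ and $T \vdash \ConS_T$ that define $\mathcal{C}$ and $\mathcal{C}_{\Sigma_1}$, and the routine bookkeeping that the $f$-images of the axioms and rules of $\NDF$ are instances of $\D{1}$, $\D{3}$, and $\ConS_T$.
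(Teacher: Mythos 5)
Your proposal is correct and follows exactly the route the paper intends: the corollary is stated as an immediate consequence of Theorem~\ref{thmND4} (with $L = \NDF$), using Claims~\ref{ND4-1}, \ref{ND4-3}, and \ref{ND4-5} to place $\PR_{g_0}^{\dagger}(x)$ in the relevant class, the uniform completeness clause to get $\PL(\PR_{g_0}^{\dagger}) = \NDF$, and the routine soundness induction to close the chain of inclusions.
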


\section{The arithmetical completeness of $\NP$ and $\NPF$}\label{AC2}

In this section, we prove the arithmetical completeness theorems for $\NP$ and $\NPF$.

\begin{thm}\label{thmNP4}
Let $L \in \{ \NP, \NPF \}$. 
There exists a $\Sigma_1$ provability predicate $\PR_T(x)$ of $T$ satisfying the following properties: 
\begin{enumerate}
    \item \textup{(Arithmetical soundness)} 
    For any $A \in \MF$ and any arithmetical interpretation $f$ based on $\PR_T(x)$, if $L \vdash A$, then $\PA \vdash f(A)$. 
    \item \textup{(Uniform arithmetical completeness)} 
    There exists an arithmetical interpretation $f$ based on $\PR_T(x)$ such that for any $A \in \MF$, $L \vdash A$ if and only if $T \vdash f(A)$.
\end{enumerate}

\end{thm}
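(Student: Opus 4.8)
The plan is to run the construction of Section~\ref{AC1} in a considerably streamlined form. Since for $\NP$ and $\NPF$ we only need to realize the weak consistency statement $\ConL_T$ (the axiom $\mathsf{P}$), no Rosser- or Arai-style device is required, and the constructed provability predicate can simply be $\PR_g(x):\equiv\exists y\,(g(y)=x)$ for a primitive recursive $g$. First I would fix, by Corollary~\ref{decidability}, a primitive recursive enumeration $\langle A_k\rangle_{k\in\omega}$ of the $L$-unprovable modal formulas and, for each $k$, a finite countermodel $\bigl(W_k,\{\prec_{k,B}\}_{B\in\MF},\Vdash_k\bigr)$ of $A_k$ given by our modal completeness theorem --- an $\NP$-frame when $L=\NP$, a transitive $\NP$-frame when $L=\NPF$ --- with the $W_k$ arranged as pairwise disjoint subsets of $\omega$ whose union is $\omega\setminus\{0\}$, all primitive recursively represented in $\PA$. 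Then, via the formalized recursion theorem, I would define $g$ together with a Solovay-style function $h$, putting $\lambda(x):\equiv\exists y\,(h(y)=x)$ and
\[
f(p):\equiv\exists x\exists y\,\bigl(x\in W_y\wedge\lambda(x)\wedge x\neq 0\wedge x\Vdash_y p\bigr),
\]
exactly as in Section~\ref{AC1}, but with the condition $\Phi$ deleted: here $h$ stays at $0$ in the standard model and moves to a node $i\in W_k$ only through the Solovay-type clause ($J_s\neq\emptyset$). The analogues of Claim~\ref{Prop:h_0} then go through; in particular $\PA\vdash\neg\Con_T\leftrightarrow\exists x(\lambda(x)\wedge x\neq 0)$ and $T\nvdash\neg\lambda(\num{i})$ for $i\neq 0$.

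The function $g$ would be defined in two procedures. Procedure~$1$ reads off theorems of $T$ from the standard proof predicate, \emph{except} that it never outputs the sentence $0=1$ --- an innocuous change, since $T$ is consistent, and one that $\PA$ provably verifies. Procedure~$2$, which starts once $h$ moves to some $i\in W_k$, collapses (no $\Phi$, hence no ``Case~A'') to the enumeration of all $\LA$-formulas $\xi_t$ in which $f(B)$ is skipped exactly when $i\nVdash_k\Box B$. Because in an $\NP$-frame every world satisfies $i\nVdash_k\Box\bot$ and $f(\bot)\equiv 0=1$, Procedure~$2$ likewise never outputs $0=1$; hence $\PA\vdash\neg\PR_g(\gn{0=1})$, which is precisely the arithmetical soundness of $\mathsf{P}$. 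The rest of the verification parallels Section~\ref{AC1}: $\PR_g$ is a $\Sigma_1$ provability predicate (in the standard model $h$ never moves, so $\PR_g$ agrees with $\Prov_T$, and $\PA+\Con_T\vdash\forall x(\Prov_T(x)\leftrightarrow\PR_g(x))$, the analogue of Claim~\ref{ND4-1}); the two-directional truth lemma relating the truth of $f(D)$ to forcing at the $\lambda$-node (the analogue of Claim~\ref{ND4-2}), whose $\Box$-case is lighter here because $\PR_g(\gn{f(C)})$ merely records whether $g$ outputs $f(C)$; arithmetical soundness of $\N$ is then immediate; and for completeness, given $L\nvdash A\equiv A_k$ and $i\in W_k$ with $i\nVdash_k A$, the truth lemma yields $\PA\vdash f(A)\to\neg\lambda(\num{i})$, whence $T\nvdash f(A)$.

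The only additional ingredient, needed when $L=\NPF$, is the arithmetical soundness of $\Box A\to\Box\Box A$, that is, $\PA\vdash\PR_g(\gn{\varphi})\to\PR_g(\gn{\PR_g(\gn{\varphi})})$ for every $\LA$-sentence $\varphi$. Under $\Con_T$ this follows from $\PA+\Con_T\vdash\forall x(\Prov_T(x)\leftrightarrow\PR_g(x))$ together with formalized $\Sigma_1$-completeness of $\PR_g$, just as in Claim~\ref{ND4-5}; so it suffices to argue under the assumption that $h$ has moved to some $i\in W_k$ (using the analogue of Claim~\ref{Prop:h_0}.2). If $\varphi\equiv f(C)$, then $\PR_g(\gn{\varphi})\equiv f(\Box C)$ and $\PR_g(\gn{\PR_g(\gn{\varphi})})\equiv f(\Box\Box C)$, and the truth lemma together with the transitivity of $W_k$ (Proposition~\ref{propN1}: $i\Vdash_k\Box C$ implies $i\Vdash_k\Box\Box C$) gives $f(\Box C)\to f(\Box\Box C)$. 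If $\varphi\notin\Img(f)$, then $\PR_g(\gn{\varphi})\notin\Img(f)$ as well, so Procedure~$2$ emits $\PR_g(\gn{\varphi})$ outright and $\PR_g(\gn{\PR_g(\gn{\varphi})})$ simply holds. I expect this last clause to be the main point requiring care: it is exactly where one sees that the plain predicate $\PR_g$ already validates $\D{3}$ on all sentences, so that --- in contrast with the $\NDF$ case --- no Arai-style refinement is needed. Everything else is a routine simplification of Section~\ref{AC1}, and the corollaries on the arithmetical completeness of $\NP$ and $\NPF$ (the analogues of Corollaries~\ref{AC_ND} and~\ref{AC_ND4}) follow exactly as before.
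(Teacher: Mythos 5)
Your proposal is correct and follows essentially the same route as the paper: a Solovay-style function $h$ driven only by the set $J'_s$ (no condition $\Phi$), a function $g$ that enumerates $T$-theorems in Procedure 1 and then switches to the world-dependent enumeration in Procedure 2, the two-directional truth lemma, and the same case split on whether $\varphi\in\Img(f)$ for verifying the axiom $\mathsf{4}$. The only (harmless) deviation is that you hard-wire the exclusion of $0=1$ into Procedure 1 to get $\PA\vdash\neg\PR_g(\gn{0=1})$ directly, whereas the paper leaves Procedure 1 untouched and derives this by splitting on $\Con_T$ versus $\exists x(\lambda_1(x)\wedge x\neq 0)$, using agreement with $\Prov_T$ in the first case and the truth lemma with $i\nVdash_k\Box\bot$ in the second.
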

\begin{proof}
Let $L \in \{ \NP, \NPF \}$.
Let $\langle A_k  \rangle_{k \in \omega}$ be a primitive recursive enumeration of all $L$-unprovable modal formulas.
As in the proof of \ref{thmND4}, for each $k \in \omega$, we can primitive recursively construct a finite $L$-model $\bigl( W_k, \{ \prec_{k,B}\}_{B \in \MF}, \Vdash_k   \bigr)$ falsifying $A_k$.
We may assume that the sets $\{ W_k \}_{k \in \omega}$ are pairwise disjoint subsets of $\omega$ and $\bigcup_{k \in \omega} W_k = \omega \setminus \{0\}$.

By using the formalized recursion theorem, we define the primitive recursive functions $h_1$ and $g_1$.
Here, the definitions of $h_1$ and $g_1$ are simpler than those of $h_0$ and $g_0$, respectively.
The definition of $h_1$ refers only to the following set $J_s'$: 

\begin{align*}
J_s' : = \Big\{ j  \in \omega \setminus \{0\} \ \Big| \ 
 & P_{T,s} \tc \neg \lambda_1(\num{j}) \ \text{or}\\
& \exists k \in \omega \setminus \{0\}\  \exists B \in \Sub(A_k) \bigl[ j \in W_k\ \&\ \\
& \bigl(P_{T,s} \tc \forall x\, \alpha_B'(x) \wedge (\alpha_B'(\num{j}) \to \neg \lambda_1(\num{j})) \ \text{or}\ \\
& P_{T,s} \tc \forall x\, \beta_B'(x) \wedge (\beta_B'(\num{j}) \to \neg \lambda_1(\num{j})) \bigr) \bigr] \Big\}.
\end{align*}

The formulas appearing in the definition are given as follows.
\begin{itemize}
    \item $\lambda_1(x) : \equiv \exists y \left( h_1(y) = x \right)$.
  \item
    $\alpha_B'(x) : \equiv \left( x \neq 0 \wedge \exists y \left( x \in W_y \wedge x \Vdash_y B \right) \right) \to \neg \lambda_1(x)$. 
    \item $\beta_B'(x) : \equiv \left( x \neq 0 \wedge \exists y \left( x \in W_y \wedge  x \nVdash_y B \right) \right) \to \neg \lambda_1(x)$.
\end{itemize}

We define the function $h_1$ as follows: 

\begin{itemize}
    \item $h_1(0) = 0$, 
    \item $h_1(s+1) = 
    \begin{cases}
        \min J'_s & \text{if } h_1(s) = 0 \ \&\  J'_s \neq \emptyset, \\
        h_1(s) & \text{otherwise.}
    \end{cases}$
\end{itemize}

The following proposition is proved in a similar way as Claim \ref{Prop:h_0}. 

\begin{cl}\label{Prop:h_1}
\leavevmode
\begin{enumerate}
\item 
$\PA \vdash \forall x \forall y \bigl( 0 < x < y \wedge \lambda_1(x) \to \neg \lambda_1(y) \bigr)$.
\item 
$\PA \vdash \neg \Con_{T} \leftrightarrow \exists x \bigl(\lambda_1(x) \wedge x \neq 0 \bigr)$.
\item 
For each $i \in \omega \setminus \{ 0\}$, $T \nvdash \neg \lambda_1(\num{i})$.
\item 
For each $l \in \omega$, $\PA \vdash \forall x \forall y \bigl( h_1(x) =0 \wedge h_1(x+1)=y \wedge y \neq 0 \to x \geq \num{l} \bigr)$.
\end{enumerate}
\end{cl}

Next, we define the function $g_1$. 
The definition of $g_1$ is also considerably simpler than that of $g_0$. 
In the definition of $g_1$, we use the formula $\PR_{g_1}(x) : \equiv \exists y (g_1(y)=x)$ and 
the arithmetical interpretation $f_1$ based on $\PR_{g_1}(x)$ defined as follows: 
$f_1(p) : \equiv \exists x \exists y (x \in W_y \wedge \lambda_1(x) \wedge x \neq 0 \wedge x \Vdash_y p)$.

\paragraph{Procedure 1.}

\textsc{Stage} $s$:
\begin{itemize}
\item If $h_1(s+1) =0$,
\begin{equation*}
  g_1(s)  = \begin{cases}
       \varphi & \text{if}\ s\ \text{is a}\ T \text{-proof of}\ \varphi, \\
               0 & \text{otherwise}.
             \end{cases}
  \end{equation*} 

Then, go to \textsc{Stage} $s+1$.

\item If $h_1(s+1) \neq 0$, go to Procedure 2.
\end{itemize}

\paragraph{Procedure 2.}
Let $s$ and $i \neq 0$ be such that $h_1(s) = 0$ and $h_1(s+1)= i$. 
We find $k \in \omega \setminus \{0\}$ such that $i \in W_k$. 
For each $t$, 
\[
g_1 (s+t) : = \begin{cases}
    0 & \text{if } \xi_t \equiv f_1(B) \text{ and } i \nVdash_k \Box B \text{ for some } B \in \MF, \\
    \xi_t & \text{otherwise.}
\end{cases}
\]
The construction of $g_1$ has been finished.
The following claim guarantees that our formula $\PR_{g_1}(x)$ is a $\Sigma_1$ provability predicate of $T$, which is proved in a similar way as Claim \ref{ND4-1}.

\begin{cl}\label{NP4-1}
For any $\LA$-formula $\varphi$,
$\PA + \Con_T \vdash \Prov_T(\gn{\varphi}) \leftrightarrow \PR_{g_1}(\gn{\varphi})$.
\end{cl}

\begin{cl}\label{NP4-2}
Let $B \in \MF$.
\begin{enumerate}
\item 
$\PA \vdash \exists x \bigl( x \neq 0 \wedge \lambda_1(x) \wedge \exists y(x \in W_y \wedge  x \Vdash_y B)  \bigr) \to f_1(B)$.
\item 
$\PA \vdash \exists x \bigl( x \neq 0 \wedge \lambda_1(x) \wedge \exists y(x \in W_y \wedge  x \nVdash_y B)  \bigr) \to \neg f_1(B)$.
\end{enumerate}
\end{cl}
\begin{proof}
We prove the claim by induction on the construction of $B$.
We prove only the case $B \equiv \Box C$.
The second clause is proved similarly as in the proof of Claim \ref{ND4-2}.
We only prove the first clause.

We work in $\PA$: Let $s$, $i \neq 0$, and $k$ be such that $h_1(s)=0$ and $h_1(s+1)= i  \in W_k$. Suppose $i \Vdash_k \Box C$.
Let $\xi_t$ be $f_1(C)$. Then, we obtain $g_1(s+t)= \xi_t$, that is, $\PR_{g_1}(\gn{f_1(C)})$ holds.
\end{proof}

\begin{cl}\label{NP4-3}
    $\PA \vdash \neg \PR_{g_1}(\gn{0=1})$.
\end{cl}
\begin{proof}
Since $\PA + \Con_T \vdash \neg \Prov_T(\gn{0=1})$, by Claim \ref{NP4-1}, it suffices to prove $\PA + \exists x (\lambda_1(x) \wedge x \neq 0) \vdash \neg \PR_{g_1}(\gn{0=1})$.
We reason in $\PA + \exists x (\lambda_1(x) \wedge x \neq 0)$: Let $s$, $i \neq 0$, and $k$ be such that $h_1(s)=0$ and $h_1(s+1) = i \in W_k$.
Since $(W_k, \{ \prec_{k,C}  \}_{C \in \MF})$ is an $\NP$-frame, we obtain $i \nVdash_k \Box \bot$. 
Thus, by Claim \ref{NP4-2}.2, $\neg \PR_{g_1}(\gn{0=1})$ holds.
\end{proof}

\begin{cl}\label{NP4-4}
If $L = \NPF$, then for any $\LA$-formula $\varphi$,
$\PA \vdash \PR_{g_1}(\gn{\varphi}) \to  \PR_{g_1}(\gn{\PR_{g_1}(\gn{\varphi})})$.
\end{cl}
\begin{proof}
As in the proof of Claim \ref{ND4-5}, we obtain 
$\PA + \Con_T \vdash \PR_{g_1}(\gn{\varphi}) \to \PR_{g_1}(\gn{\PR_{g_1}(\gn{\varphi})})$.
So, it suffices to prove $\PA + \exists x (\lambda_1(x) \wedge x \neq 0) \vdash \PR_{g_1}(\gn{\varphi}) \to \PR_{g_1}(\gn{\PR_{g_1}(\gn{\varphi})})$.
We distinguish the following two cases: 

\medskip

Case 1. $\varphi \in \Img(f_1)$: 
The proof of this case is the same as that of Claim \ref{ND4-5}.

\medskip

Case 2. $\varphi \notin \Img(f_1)$: 
We work in $\PA + \exists x (\lambda_1(x) \wedge x \neq 0)$: 
Let $s$, $i \neq 0$, and $k$ be such that $h_1(s)=0$ and $h_1(s+1) = i \in W_k$.  
Let $t$ be such that $\xi_t \equiv \PR_{g_1}(\gn{\varphi})$.
Since $\PR_{g_1}(\gn{\varphi}) \notin \Img(f_1)$, we obtain $g_1(s+t) = \PR_{g_1}(\gn{\varphi})$. 
So $\PR_{g_1}(\gn{\PR_{g_1}(\gn{\varphi})})$ holds.
\end{proof}

We shall complete our proof of Theorem \ref{thmNP4}. 
The first clause of the theorem follows from Claims \ref{NP4-3} and \ref{NP4-4}.
The second clause of the theorem is proved by using Claims \ref{Prop:h_1}.3 and \ref{NP4-2} as in the proof of Theorem \ref{thmND4}.
\end{proof}

\begin{cor}[The arithmetical completeness of $\NP$]\label{AC_NP}
\begin{align*}
    \NP & = \bigcap \{ \PL(\PR_T) \mid \PR_T(x) \text{ is a provability predicate satisfying }  T \vdash \ConL_T \},\\
    & = \bigcap \{ \PL(\PR_T) \mid \PR_T(x) \text{ is a } \Sigma_1 \text{ provability predicate satisfying }  T \vdash \ConL_T \}.
\end{align*}
Moreover, there exists a $\Sigma_1$ provability predicate $\PR_T(x)$ of $T$ such that $\NP = \PL(\PR_T)$.
\end{cor}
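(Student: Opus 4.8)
The statement is Corollary~\ref{AC_NP}, which packages Theorem~\ref{thmNP4} (for $L = \NP$) together with the easy direction of G2. The plan is as follows. First, the ``Moreover'' clause is immediate: Theorem~\ref{thmNP4} with $L = \NP$ gives a $\Sigma_1$ provability predicate $\PR_T(x)$ and an arithmetical interpretation $f$ based on it such that for all $A \in \MF$, $\NP \vdash A \iff T \vdash f(A)$; combining this with arithmetical soundness (Clause~1), which gives $\NP \vdash A \Rightarrow \PA \vdash f'(A)$ for \emph{every} interpretation $f'$ based on $\PR_T(x)$, yields $\PL(\PR_T) = \NP$ directly. (Here one uses that $\PA \subseteq T$, so $T$-provability of $f'(A)$ for all $f'$ and all $A$ with $\NP \vdash A$ holds, and conversely if $\NP \nvdash A$ then already $T \nvdash f(A)$ for the specific $f$, so $A \notin \PL(\PR_T)$.)

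For the two displayed equalities, I would argue the inclusions in both directions. The inclusion $\NP \subseteq \bigcap\{\PL(\PR_T) \mid \PR_T(x) \text{ satisfies } T \vdash \ConL_T\}$: one must check that for \emph{every} provability predicate $\PR_T(x)$ with $T \vdash \ConL_T$ and every arithmetical interpretation $f$ based on it, $\NP \vdash A$ implies $T \vdash f(A)$. Since $\NP = \N + \neg\Box\bot$, and the rule \textsc{Nec} corresponds to $\D1$ which holds for all provability predicates (so $\N \subseteq \PL(\PR_T)$ always, as shown in \cite{Kura24}), it remains to observe that $f(\neg\Box\bot) \equiv \neg\PR_T(\gn{0=1}) \equiv \ConL_T$ is provable in $T$ by hypothesis; closure of $\PL(\PR_T)$ under modus ponens and \textsc{Nec} then gives the inclusion. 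The reverse inclusion for the first equality, $\bigcap\{\PL(\PR_T) \mid \dots\} \subseteq \NP$, follows from the ``Moreover'' clause: the $\Sigma_1$ provability predicate produced there satisfies $T \vdash \ConL_T$ (indeed $\PA \vdash \ConL_T$ by Claim~\ref{NP4-3}, since $f(\neg\Box\bot)$ is, up to the interpretation, $\neg\PR_{g_1}(\gn{0=1})$), so it is one of the predicates in the intersection, and $\PL$ of it equals $\NP$, whence the intersection is contained in $\NP$.

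The second equality is sandwiched between the two: the intersection over all provability predicates satisfying $T \vdash \ConL_T$ is contained in the intersection over just the $\Sigma_1$ ones (fewer predicates, larger intersection), which in turn contains $\NP$ by the argument in the previous paragraph applied to $\Sigma_1$ predicates, and is contained in $\PL(\PR_{g_1})= \NP$ since $\PR_{g_1}(x)$ is $\Sigma_1$ and satisfies $\PA \vdash \ConL_T$. Chaining these gives all three sets equal to $\NP$.

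There is essentially no obstacle here: the entire content is in Theorem~\ref{thmNP4}, and the corollary is a bookkeeping exercise in unwinding the definitions of $\PL$, arithmetical interpretation, and the fact that $\neg\Box\bot$ interprets to $\ConL_T$. The only point requiring a word of care is verifying that Claim~\ref{NP4-3} indeed certifies $T \vdash \ConL_T$ for the constructed predicate, i.e.\ that $\ConL_T$ formulated with $\PR_{g_1}(x)$ is literally $\neg\PR_{g_1}(\gn{0=1})$ and that $\PA$-provability entails $T$-provability; both are immediate. Hence I would present the proof as: (i) note $\N \subseteq \PL(\PR_T)$ for all provability predicates; (ii) deduce $\NP \subseteq \PL(\PR_T)$ whenever $T \vdash \ConL_T$; (iii) invoke Theorem~\ref{thmNP4} and Claim~\ref{NP4-3} for the witnessing $\Sigma_1$ predicate; (iv) conclude all three sets coincide with $\NP$.
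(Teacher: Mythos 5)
Your proposal is correct and follows exactly the route the paper intends: the corollary is left without an explicit proof precisely because it is the standard unwinding of Theorem~\ref{thmNP4} (arithmetical soundness gives $\NP \subseteq \PL(\PR_T)$ for every provability predicate with $T \vdash \ConL_T$, since $f(\neg\Box\bot) \equiv \ConL_T$ and $\PL(\PR_T)$ contains all tautologies and is closed under modus ponens and \textsc{Nec}; Claim~\ref{NP4-3} certifies that the constructed $\Sigma_1$ predicate $\PR_{g_1}(x)$ lies in both intersections; and uniform arithmetical completeness gives $\PL(\PR_{g_1}) \subseteq \NP$). The sandwiching of the two intersections between $\NP$ and $\PL(\PR_{g_1})$ is exactly the intended bookkeeping.
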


\begin{cor}[The arithmetical completeness of $\NPF$]\label{AC_NP4}
\begin{align*}
    \NPF & = \bigcap \{ \PL(\PR_T) \mid \PR_T(x) \text{ satisfies } \D{3} \text{ and } T \vdash \ConL_T \},\\
    & = \bigcap \{ \PL(\PR_T) \mid \PR_T(x) \text{ is } \Sigma_1 \text{ and satisfies } \D{3} \text{ and } T \vdash \ConL_T \}.
\end{align*}
Moreover, there exists a $\Sigma_1$ provability predicate $\PR_T(x)$ of $T$ such that $\NPF = \PL(\PR_T)$.
\end{cor}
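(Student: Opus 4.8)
The plan is to derive Corollary \ref{AC_NP4} from Theorem \ref{thmNP4} (applied with $L = \NPF$) together with a routine arithmetical soundness argument; no new construction is required. I would begin with the ``Moreover'' clause. Theorem \ref{thmNP4} supplies a $\Sigma_1$ provability predicate $\PR_T(x)$ that is arithmetically sound for $\NPF$ and for which there is an arithmetical interpretation $f$ with $\NPF \vdash A \iff T \vdash f(A)$ for every $A \in \MF$. Arithmetical soundness immediately gives $\NPF \subseteq \PL(\PR_T)$: if $\NPF \vdash A$, then $\PA \vdash f'(A)$, hence $T \vdash f'(A)$, for every arithmetical interpretation $f'$ based on $\PR_T(x)$. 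Conversely, if $A \in \PL(\PR_T)$, then in particular $T \vdash f(A)$ for the distinguished interpretation $f$, so $\NPF \vdash A$; thus $\PL(\PR_T) \subseteq \NPF$, and hence $\NPF = \PL(\PR_T)$. I would also record here that this particular $\PR_T(x)$ satisfies $\D{3}$ and $T \vdash \ConL_T$ — in fact $\PA \vdash \ConL_T$ and $\PA$ proves the $\D{3}$-scheme for $\PR_T(x)$ — which is precisely the content of Claims \ref{NP4-3} and \ref{NP4-4} (alternatively, this can be read off from arithmetical soundness applied to the $\NPF$-theorems $\neg \Box \bot$ and $\Box p \to \Box \Box p$, using that $f'(p)$ ranges over all $\LA$-sentences as $f'$ varies).

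Next I would establish the soundness half of the two displayed equalities: for every provability predicate $\PR_T(x)$ of $T$ satisfying $\D{3}$ and $T \vdash \ConL_T$, one has $\NPF \subseteq \PL(\PR_T)$, that is, $T \vdash f(A)$ for every arithmetical interpretation $f$ based on $\PR_T(x)$ whenever $\NPF \vdash A$. This is an induction on the length of an $\NPF$-derivation of $A$: propositional axioms are sent by $f$ to tautologies, which $T$ proves, and modus ponens is preserved since $f$ commutes with $\to$; the rule \textsc{Nec} is handled by $\D{1}$, which holds for every provability predicate (if $T \vdash \varphi$ then $\PA \vdash \PR_T(\gn{\varphi})$, hence $T \vdash \PR_T(\gn{\varphi})$); an instance of the axiom $\Box B \to \Box \Box B$ is sent by $f$ to $\PR_T(\gn{f(B)}) \to \PR_T(\gn{\PR_T(\gn{f(B)})})$, an instance of $\D{3}$, which $T$ proves; and the axiom $\neg \Box \bot$ is sent to $\ConL_T$, which $T$ proves by hypothesis. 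Consequently $\NPF$ is contained in each of the two intersections on the right-hand sides — the second being over the subclass of $\Sigma_1$ predicates satisfying the same conditions.

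Finally, the reverse inclusions of both equalities are immediate from the ``Moreover'' clause: the $\Sigma_1$ predicate $\PR_T(x)$ obtained in the first step satisfies $\D{3}$ and $T \vdash \ConL_T$, so it is among the predicates quantified in both intersections, and $\PL(\PR_T) = \NPF$; hence each intersection is $\subseteq \NPF$. Chaining $\NPF \subseteq \bigcap_{\text{all}} \PL(\PR_T) \subseteq \bigcap_{\Sigma_1} \PL(\PR_T) \subseteq \NPF$ yields the two equalities, and the ``Moreover'' clause has already been proved.

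Since the substantive work lies entirely inside Theorem \ref{thmNP4}, I do not expect a genuine obstacle here. The one point that deserves a little care is confirming that the predicate supplied by Theorem \ref{thmNP4} really validates $\D{3}$ as a scheme over all $\LA$-sentences and really proves $\ConL_T$, rather than merely validating the corresponding modal principles under arithmetical interpretations; but this is exactly what Claims \ref{NP4-3} and \ref{NP4-4} provide, so the remaining argument reduces to bookkeeping.
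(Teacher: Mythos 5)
Your proposal is correct and follows exactly the route the paper intends: the corollary is an immediate consequence of Theorem \ref{thmNP4} together with the routine soundness induction (using $\D{1}$, $\D{3}$, and $T \vdash \ConL_T$) for the left-to-right inclusions, and the distinguished $\Sigma_1$ predicate from Claims \ref{NP4-3} and \ref{NP4-4} for the reverse inclusions. The chain $\NPF \subseteq \bigcap_{\text{all}} \subseteq \bigcap_{\Sigma_1} \subseteq \PL(\PR_T) = \NPF$ is precisely the standard bookkeeping the paper leaves implicit.
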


\section{Concluding remarks}

In this paper, we studied the four logics $\NP$, $\ND$, $\NPF$, and $\NDF$ based on $\N$ obtained by adding at least one of the principles $\mathsf{P}$, $\mathsf{D}$, and $\mathsf{4}$.
For these logics, we first established modal completeness and the finite frame property with respect to the relational semantics of Fitting, Marek, and Truszczy\'nski (Theorem \ref{thm:MC}).
We then proved their uniform arithmetical completeness by constructing suitable provability predicates (Theorems \ref{thmND4} and \ref{thmNP4}).

For $\NDF$, our proofs of both modal completeness and arithmetical completeness required methods different from the previous constructions. 
In the proof of modal completeness, we introduced a new method for reconstructing finite frames which preserves the property of being an $\ND$-frame while ensuring transitivity. 
In the proof of arithmetical completeness, we used Arai's construction, which provides Rosser provability predicates satisfying the requirements corresponding to $\NDF$. 
However, applying Arai's construction directly would yield a provability logic stronger than $\NDF$. 
The construction in Section~\ref{AC1} applies Arai's construction only to formulas lying outside the arithmetical interpretation to avoid this problem.

As summarized in Section~\ref{Summary}, the remaining open problems seem to require methods beyond those developed here.  
For the logics $\mathsf{EN4}$, $\mathsf{ENP4}$, and $\mathsf{ECN4}$, the main difficulty is that they are naturally treated by neighborhood semantics, where $\mathsf{4}$ is not simply a transitivity condition on a binary relation.  
For the logics $\mathsf{CN}$, $\mathsf{CNP}$, $\mathsf{CND}$, $\mathsf{CN4}$, and $\mathsf{CNP4}$, a better understanding of the corresponding relational semantics is still needed. 
These problems remain natural directions for future work.

\section*{Acknowledgments}

The authors would like to thank the anonymous referees for their helpful comments and suggestions. 
The first author was supported by JST SPRING, Grant Number JPMJSP2148. 
The second author was supported by JSPS KAKENHI Grant Number JP23K03200.

\bibliographystyle{plain}
\bibliography{refs}

\end{document}